\theoremstyle{plain}
\numberwithin{equation}{section}
\newtheorem{theorem}{Theorem}[section]
\newcommand{\rc}{\begin{color}{red}}
\newcommand{\ec}{\end{color}}
\newtheorem{condition}[theorem]{Condition}
\newtheorem{corollary}[theorem]{Corollary}
\newtheorem{definition}[theorem]{Definition}
\newtheorem{example}[theorem]{Example}
\newtheorem{lemma}[theorem]{Lemma}
\newtheorem{proposition}[theorem]{Proposition}
\newtheorem{remark}[theorem]{Remark}
\newcommand{\R}{{\mathbb{R}}}
\begin{document}
\title{Stochastic dynamics  on evolving geometric graphs}
\author{Alexei Daletskii\thanks{Department of Mathematics, The University of York, York YO10\;5DD, U.K. ({\tt alex.daletskii@york.ac.uk}).} \and Dmitri Finkelshtein\thanks{Department of Mathematics,
Swansea University, Singleton Park, Swansea SA2 8PP, U.K. ({\tt d.l.finkelshtein@swansea.ac.uk}).}}

\maketitle

\begin{abstract}
We consider an infinite locally finite system (configuration) $\gamma$
of particles distributed over a Euclidean space $X$. Each particle
located at $x\in X$carries an internal parameter (mark, or ``spin'')
$\sigma_{x}\in S=\mathbb{R}.$ Such collections of particles form
the space of marked configurations $\Gamma(X,S)$. We construct the
following stochastic dynamics in $\Gamma(X,S)$: while the configuration
$\gamma$ of particle positions performs a random evolution, the corresponding
marks interact with each other and perform a coupled infinite-dimensional
diffusion. The study of a spin dynamics on a fixed configuration $\gamma$
was initiated in \cite{DaF} and continued in \cite{ChDa}, and is
based on the generalisation of the Ovsjannikov method. In the present
paper, the underlying configuration evolves according to a Birth-and-Death
process. We prove the existence and uniqueness of such dynamics and show that it forms a càdlàg process in $\Gamma(X,S)$.

\end{abstract}

\tableofcontents

\section{Introduction}

We consider the following model of interacting particle systems. Our
system consists of an infinite number of particles of physical, biological or any other nature, which are located in a Euclidean
space $X$ and form a locally finite set (configuration) $\gamma\subset X$.
The space of all such configurations will be denoted by $\Gamma(X)$.
Each particle located at $x\in X$ carries an internal parameter (mark,
or ``spin'') $\sigma_{x}\in S=\mathbb{R}.$ The corresponding marks
interact with each other and perform a random evolution $\Xi_t$, described
by an infinite system of coupled diffusion equations, with a finite
radius of interaction.

For a fixed configuration $\gamma$, the evolution $\Xi_t(\gamma)=(\xi_{x,t})_{x\in\gamma}$
can be studied in the product space $S^{\gamma}$, or, rather, a scale
of its Hilbert or Banach subspaces, see \cite{DaF}, \cite{Dal},
\cite{ADGC} and \cite{ChDa}. In the present paper, however, we consider a
more complex situation where the configuration $\gamma$ evolves,
too, according to a given stochastic process $\gamma_{t}$, $t\in[0,T].$
Therefore, the product space, in which the spin dynamics lives, depends
on time $t\in[0,T]$ and realisation of process $\gamma_{t}$.

A natural
choice for a  space where the combined dynamics $(\gamma_{t},\Xi_{t}(\gamma_t))$,
$t\in[0,T],$ can be studied, is the space $\Gamma(X,S)$
 of marked
configurations
$$\widehat{\gamma}=\{(x,\sigma_x),x\in\gamma\in\Gamma(X),\sigma_x\in S\}.$$
Indeed,  $\Gamma(X,S)$ possesses the structure of a fibre bundle
\begin{equation}\label{eq:fibre}
p_X:\Gamma(X,S)\rightarrow \Gamma(X),\,p_X^{-1}(\gamma)\simeq S^\gamma,
\end{equation}
see Section \ref{sec-conf} for a
detailed description of those structures.

In this paper, we consider the case where $\gamma_{t},$ $t\in[0,T]$,
is a $\Gamma(X)$-valued jump process, satisfying certain additional conditions, see Section \ref{sec:jump_proc},
where the class ${\mathcal{R}}({\mathrm {L}}\Gamma(X))$ of these processes is introduced,
and Section \ref{Sec DoC} for main examples given by Birth-and-Death processes.
The dynamics of marks is defined by the following (heuristic) system of stochastic differential equations in $S$:
\begin{align}
    d\xi_{x,t}=&\left[\phi_{x}(\xi_{x,t})+\sum_{y\in\gamma_{x,t}}\phi_{xy}(\xi_{x,t},\xi_{y,t})\right]dt+\nonumber\\
    &+\left[\sum_{y\in\gamma_{x,t}}\psi_{xy}(\xi_{x,t},\xi_{y,t})\right]dW_{x,t},\,x\in\gamma_t,\label{eq:SDE}
\end{align}
where $\left(W_{x,t}\right)_{x\in\gamma}$
is a family of independent Wiener processes and functions $\phi_x:S\rightarrow S$, $\phi_{xy}:S\times S\rightarrow S$ and $\psi_{xy}:S\times S\rightarrow S$
satisfy certain regularity conditions, see Section \ref{sec:phantom_dynamics}.

A rigorous interpretation of system \eqref{eq:SDE} can be obtained using the fibration structure \eqref{eq:fibre}.
The main result of this paper, still formulated in a somewhat heuristic form, is the following statement
(cf. Theorem \ref{theor:solution} for its rigorous formulation).
\begin{theorem}\label{theor:sol0}
For any $(\gamma_t)_{t\in [0,T]}\in\mathcal{R}(\mathrm{L}\Gamma(X))$ and $\widehat{\gamma}_0\in \Gamma(X,S) $,
there exists a unique $\Gamma(X,S)$-valued process $(\widehat{\gamma}_{t})_{t\in[0,T]}$
with initial value   $\widehat{\gamma}_0$ and such that, under identification  \eqref{eq:fibre},
\begin{equation}
 \widehat{\gamma}_t=(\gamma_t,\Xi_t), \, \, t\in[0,T],
\end{equation}
where $\Xi_t=(\xi_{x,t})_{x\in\gamma}$ is a solution of system \eqref{eq:SDE} along $\gamma_{t}$. The process $(\widehat{\gamma}_{t})_{t\in[0,T]}$
is càdlàg in the topology of  $\Gamma(X,S)$.
\end{theorem}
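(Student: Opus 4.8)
The plan is to build the combined process fiber-by-fiber, exploiting the bundle structure in equation \eqref{eq:fibre}: the base process $(\gamma_t)$ is given, and on each realization of that jump process we must solve the spin SDE \eqref{eq:SDE} and show the resulting section of $\Gamma(X,S)$ is c\`adl\`ag. Let me think through the construction.
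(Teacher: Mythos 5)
Your proposal is not yet a proof: it only restates the strategy (fix a realization of the base process, solve the spin SDE on it, check the c\`adl\`ag property), which is indeed the same decomposition the paper uses, but every substantive step is missing. Concretely, the following issues must be addressed and none of them is touched. First, ``solving the spin SDE on each realization'' is the hard part: since $\gamma_t$ changes in time, the index set of the system \eqref{eq:SDE} changes, and the paper resolves this by passing to the \emph{phantom} configuration $\gamma^T=\bigcup_{t\in[0,T]}\gamma_t$ and solving a single non-autonomous infinite system \eqref{MainSystem} in the product space $S^{\gamma^T}$, within a scale of weighted sequence spaces $l^p_\alpha$ via an Ovsjannikov/Gronwall argument (Theorem \ref{Existence}). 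This is exactly where the hypothesis $(\gamma_t)\in\mathcal{R}(\mathrm{L}\Gamma(X))$ enters: the logarithmic bound \eqref{eq:log} on $n_{x,R}(\gamma^T)$ is what makes the interaction matrix an Ovsjannikov operator and the Gronwall constant $K_T(\alpha,\beta)$ finite. A fiber-by-fiber construction that does not introduce the phantom space has no single state space in which to pose the SDE, and the consistency of solutions across different time horizons (Remark \ref{rem:proj}) is needed to make ``$\Xi_t$ along $\gamma_t$'' well defined.

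Second, even granting the solution $\Xi^T_t=(\xi_{x,t})_{x\in\gamma^T}$, you must show that $\omega\mapsto(\gamma_t,\Xi_t)\in\Gamma(X,S)$ is measurable in the $\tau$-topology; the paper does this (Proposition \ref{thm:embed1-1}) by a three-step approximation through finite-volume cut-offs $\Xi^n$, continuity of $\gamma\mapsto(\gamma,\Xi^n_t(\gamma))$, and local constancy of $\gamma_t$ on small time intervals. Third, the c\`adl\`ag claim requires combining Condition \ref{cond-fin} (only finitely many changes of $\gamma_t\cap\Lambda$ on $[0,T]$) with the pathwise continuity of each coordinate $t\mapsto\xi_{x,t}$; without Condition \ref{cond-fin} the right-continuity of $t\mapsto\sum_{x\in\gamma_t}F(x,\xi_{x,t})$ can fail. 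Finally, there is an initial-data mismatch you do not mention: $\widehat{\gamma}_0$ prescribes marks only for $x\in\gamma_0$, whereas the phantom-space system needs initial values for all $x\in\gamma^T$; the paper fixes this by prescribing $\xi_{x,0}=\varphi(x)$ for $x\notin\gamma_0$ via a measurable map $\varphi:X\to S$ (Remark \ref{rem:ini}), and uniqueness of the combined process is only up to this choice. Your outline is pointed in the right direction, but as written it contains no argument for any of these four points.
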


The structure of the paper is as follows. Section \ref{sec-conf} is devoted to a brief review of the main structures on marked configuration spaces.
In Section \ref{sec:jump_proc}, we introduce a class of random processes on $\Gamma(X)$ satisfying a priori conditions required for what follows.
Section \ref{sec:phantom_dynamics} is devoted to the discussion of a rigorous form of system \eqref{eq:SDE} on a fixed sample path of the underlying process $\gamma_t$.
Our main result - Theorem \ref{theor:sol0} / \ref{theor:solution} is proved in Section \ref{sec:dynamics}.
In Section \ref{Sec DoC}, we discuss a class of Birth-and-Death processes satisfying conditions of Section \ref{sec:jump_proc}.
Finally, in the Appendix, we provide some auxiliary technical results based on our previous work on infinite SDE systems.

\section{Marked configuration spaces $\Gamma(X,S)$}\label{sec-conf}

As a location (phase) space $X$ for our particle system, let us fix
the $d$-dimensional ($d\geq1$) Euclidean space $\mathbb{R}^{d}$.
It is endowed with the Lebesgue measure $\mathrm{d}x$ on the Borel
$\sigma$-algebra $\mathcal{B}(X)$. By $\mathcal{B}_{0}(X)$ we denote
the ring of all bounded sets from $\mathcal{B}(X)$. The configuration
space $\Gamma(X)$ consists of all locally finite subsets of $X$,
that is,
\begin{equation}
\Gamma(X)=\left\{ \gamma\subset X:\ {\cal N}\left(\gamma_{\Lambda}\right)<\infty\text{ for any }\Lambda\in\mathcal{B}_{0}(X)\right\} ,\label{gamma}
\end{equation}
where ${\cal N}\left(\gamma_{\Lambda}\right)$ stands for the cardinality
of the restriction $\gamma_{\Lambda}:=\gamma\cap\Lambda$. Let $C_{0}(X)$
be the set of all continuous functions $f:X\rightarrow\mathbb{R}$
with compact support. The space $\Gamma(X)$ is equipped in the standard
way with the \emph{vague topology}, which is the weakest one that
makes continuous all maps
\[
\Gamma(X)\ni\gamma\mapsto\left\langle f,\gamma\right\rangle :=\sum_{x\in\gamma}f(x),\quad f\in C_{0}(X).
\]
It is well known (see, e.g., \cite[Section 15.7.7]{Kal}) that $\Gamma(X)$
is a Polish (i.e., separable completely metrizable) space in this
topology; an explicit construction of the appropriate metric can be
found in \cite{KKut}. By $\mathcal{P}(\Gamma(X))$ we denote the
space of all probability measures on the corresponding Borel $\sigma$-algebra
$\mathcal{B}(\Gamma(X))$.

Let now $S$ be another Euclidean space $\mathbb{R}^{m}$ (with $m\neq d$
in general) and consider the Cartesian product $X\times S$.
For any element $\widehat{x}:=(x,s)$ of $\widehat{X}$ its $S$-component
$s$ may be seen as a mark (spin, charge, etc.) attached to a particle
placed at position $x\in X$. Given a set $\Lambda\subset X$, we
will often write for short $\widehat{\Lambda}:=\Lambda\times S$.
The canonical projection $p_{X}:X\times S\rightarrow X$ can be naturally
extended to the configuration space $\Gamma(X\times S)$.
Observe that for a configuration $\widehat{\gamma}\in\Gamma(X,S)$
its image $p_{X}(\widehat{\gamma})$ is a subset of $X$ that possibly
admits accumulation and multiple points, and hence does not in general
belong to $\Gamma(X)$. The\emph{\ marked} configuration space\emph{\ }$\Gamma(X,S)$
is then defined in the following way (see e.g. \cite{CKMS}, \cite{DVJ1},
\cite{KMM}):
\begin{equation}
\Gamma(X,S):=\left\{ \widehat{\gamma}\in\Gamma(X\times S):\text{ }p_{X}(\widehat{\gamma})\in\Gamma(X)\right\} .\label{def-marked}
\end{equation}
We will systematically use the notation
\[
\gamma_{\Lambda}:=\gamma\cap\Lambda\text{ and }\widehat{\gamma}_{\Lambda}:=\widehat{\gamma}\cap\widehat{\Lambda}
\]
for $\gamma\in\Gamma(X),\ \widehat{\gamma}\in\Gamma(X,S),\Lambda\subset X$
and cylinder sets $\widehat{\Lambda}:=\Lambda\times S$.

We equip $\Gamma(X,S)$ with the so-called $\tau$-topology defined
as the weakest one that makes continuous the map
\begin{equation}
\Gamma(X,S)\ni\widehat{\gamma}\mapsto\left\langle g,\widehat{\gamma}\right\rangle :=\sum_{(x,s)\in\widehat{\gamma}}g(x,s)\label{top}
\end{equation}
for any bounded continuous function $g:X\times S\rightarrow\mathbb{R}$
with $\mathrm{supp\,}g\subset\Lambda\times S$ for some $\Lambda\in\mathcal{B}_{0}(X)$,
i.e. with spatially compact support. This topology has been employed
in different frameworks in e.g.\textbf{\ }\cite{AKLU}, \cite{DKKP2}
and\ \cite{KunaPhD}; for a short account of its properties, see also
\cite{DKKP}. It makes
$\Gamma(X,S)$ a Polish space. For an example of the $\tau$-consistent
metric on $\Gamma(X,S)$ see Section 2 of \cite{CG}. We then endow
$\Gamma(X,S)$ with the associated Borel $\sigma$-algebra $\mathcal{B}(\Gamma(X,S))$.
This is the smallest $\sigma$-algebra for which the counting variable
\begin{equation}
\widehat{\gamma}\mapsto N(\widehat{\gamma}\cap\Delta)\label{ng}
\end{equation}
is measurable for any $\Delta\in\mathcal{B}(X\times S)$ with $p_{X}(\Delta)\in\mathcal{B}_{0}(X)$.
\begin{remark}
\label{fibre}

The projection map
$$p_X:\Gamma(X,S)\rightarrow\Gamma(X)$$
induces a fibre bundle-type structure
on $\Gamma(X,S)$. It follows directly from the definition of the corresponding topologies
that the map $p_{X}:\Gamma(X,S)\rightarrow\Gamma(X)$ is continuous.
Hence for any configuration $\gamma$ the fibre $p_{X}^{-1}(\gamma)$
is a Borel subset of $\Gamma(X,S)$. Moreover, any $\widehat{\gamma}\in{\Gamma}(X,S)$ can be uniquely represented
by the pair
\[
\widehat{\gamma}=(\gamma,\sigma_{\gamma}),\text{ where }\gamma=p_{X}(\widehat{\gamma})\in\Gamma(X) \text{ and }\sigma_{\gamma}=(\sigma_{x})_{x\in\gamma}\in S^{\gamma},
\]
and $S^{\gamma}:=\prod\limits_{x\in\gamma}S_{x},\ S_{x}:=S.$ So the fibres $p_{X}^{-1}(\gamma)$ are isomorphic to the product-spaces $S^{\gamma}$, and we will denote by
 $i_\gamma  $
 the corresponding isomorphism, so that
 \begin{equation}\label{eq:fibre1}
     i_\gamma : p_{X}^{-1}(\gamma)\rightarrow S^{\gamma},\;i_\gamma (\widehat{\gamma})=(\sigma_{x})_{x\in\gamma}\in S^{\gamma}.
 \end{equation}
\end{remark}

\section{A class of jump processes on $\Gamma(X)$ and their phantoms }\label{sec:jump_proc}

Consider a suitable filtered and complete probability space $\mathbf{P}:=(\Omega,\mathcal{F},\mathbb{P},\mathbb{F})$
and let $\gamma_{t}\in\Gamma(X)$, $t\in[0,T]$, be a $\Gamma(X)$-valued
random process on $\mathbf{P}$. Define the set
\[
\gamma^{(T)}:=\bigcup_{t\in[0,T]}\gamma_{t}\subset X,
\]
which will be called the phantom of process $\gamma_{t}$. We assume
that the following condition holds.

\begin{condition} \label{cond-fin}For any compact $\Lambda\subset X$
the mapping
\[
[0,T]\ni t\mapsto\gamma_{t}\cap\Lambda
\]
changes its value only a finite number of times, a.s. \end{condition}

We introduce the notation

\[
n_{x,R}(\gamma):=N(\gamma\cap B_{x,R}),
\]
where $B_{x,R}$ is the ball of radius $R$ centered at $x\in X$.

\begin{definition}\label{def:log}
    Define the space ${\mathrm L}\Gamma(X)$ of configurations $\gamma\in\Gamma(X)$ satisfying the following estimate: for any $ R>0$ there exists a  constant $a_{R}=a_R(\gamma)$ s.t.
\begin{equation}\label{eq:log}
     n_{x,R}(\gamma)\leq a_{R}(\gamma)(1+\log(1+\left\vert x\right\vert )),
\end{equation}
for all $x\in X$.
\end{definition}

\begin{remark}\label{rem:Poisson}
    Estimate \eqref{eq:log} holds for a typical configuration $\gamma$ distributed according to a Poisson or, more generally, Gibbs measure on $\Gamma(X)$  with a superstable low regular interaction energy, in which case $n_{x,R}$ is proportional to $\sqrt{1+\log(1+\left\vert x\right\vert )}$, see e.g. \cite{Ruelle} and \cite{KlYa} (p. 1047).
\end{remark}

We assume that the process $\gamma_{t}$, $t\in[0,T]$, satisfies
the following condition.

\begin{condition} \label{cond-log-1}
For a.a. realizations of the process $\gamma_t,\,t\in [0,T]$, we have
\begin{equation}\label{eq:log}
    \gamma^T\in {\mathrm L}\Gamma(X).
\end{equation}
\end{condition}
Observe that the condition above implies that
\begin{equation}\label{eq:log}
    \gamma^t\in {\mathrm L}\Gamma(X)\, \text{for all }  t\in[0,T].
\end{equation}

\begin{definition}
The class of processes satisfying Conditions \ref{cond-fin} and \ref{cond-log-1} will be denoted by ${\mathcal{R}}({\mathrm {L}}\Gamma(X))$.
\end{definition}
In Section \ref{Sec DoC}, we will discuss an example of such processes, given by so-called Birth-and-Death (BaD) processes.

\
\section{Dynamics in the Phantom Space}\label{sec:phantom_dynamics}

In this section, we fix a realisation of the process $\gamma_t,\,t\in\mathcal{T}$,
such that the bound \eqref{eq:log} holds, and the corresponding phantom configuration $\gamma^T$.
Consider the product space
\[
\mathbb{S}^{T}:=S^{\gamma^{T}},
\]
which we will call the Phantom Space. We will construct a dynamics
on this space, defined by the pair interaction between the spins of elements
of configurations $\gamma_{t}$ and the corresponding single particle
potentials. This dynamics will solve the system of non-autonomous
stochastic equations
\begin{align}
\xi_{x,t} & =\zeta_{x}+\int_{0}^{t}\Phi_{x}(\Xi_{s},s)ds+\int_{0}^{t}\Psi_{x}(\Xi_{s},s)dW_{x,s},\ x\in\gamma^{T},\ \label{MainSystem}\\
t & \in\mathcal{T}:=[0,T],\ T>0,\nonumber
\end{align}
where $\Xi_{t}=\left(\xi_{x,t}\right)_{x\in\gamma^{T}}$ and $\left(W_{x,t}\right)_{x\in\gamma^{T}}$
is a family of independent Wiener processes on the probability space
$\mathbf{P}$, and $\zeta_{x}\in S$.
Thus, we aim to find a strong solution of SDE system \eqref{MainSystem}, that is, a family $\Xi_{t}=\left(\xi_{x,t}\right)_{x\in\gamma^{T}}$ of continuous adapted stochastic
processes on P such that the equality \eqref{MainSystem} holds for all $t \in\mathcal{T}$, almost surely, that is, on a common for all $t$ set of probability $1$.

The coefficients defined explicitly
in Assumption \ref{mainass} below.

First we need to introduce the following notations. We fix $\rho>0$
and denote by $n_{x}(\gamma):=n_{x,\rho}(\gamma)$, $x\in\gamma$,
the number of elements in the set
\[
b_{x}:=\left\{ y\in\gamma:\left\vert x-y\right\vert \leq\rho\right\} .
\]
Observe that $n_{x}(\gamma)\geq1$ for all $x\in\gamma$, because
$x\in b_{x}$. We will also use the notation $\gamma_{x}:=b_{x}\setminus\left\{ x\right\} \equiv\left\{ y\in\gamma:\left\vert x-y\right\vert \leq\rho,y\neq x\right\} $
and write $\gamma\setminus x$ instead of $\gamma\setminus\left\{ x\right\} $.

Let us consider measurable functions $\phi:S\rightarrow\mathbb{R}$,
$\varphi_{xy}:S^{2}\rightarrow\mathbb{R}^{1}$ and $\psi_{xy}:S^{2}\rightarrow\mathbb{R}^{1}$,
$x,y\in\gamma^{T}$, satisfying the following condition.

\begin{condition} \label{cond-diss}\label{mainass}
\begin{enumerate}
\item Functions $\varphi_{xy}$ satisfy uniform Lipschitz condition
\begin{align*}
\left\vert \varphi_{xy}(\sigma_{1},s_{1})-\varphi_{xy}(\sigma_{2},s_{2})\right\vert  & \leq\bar{a}\left(\left\vert \sigma_{1}-\sigma_{2}\right\vert +\left\vert s_{1}-s_{2}\right\vert \right),\\
\left\vert \varphi_{xy}(\sigma_{1},s_{1}))\right\vert  & \leq\bar{a}\left(1+\left\vert \sigma_{1}\right\vert +\left\vert s_{1}\right\vert \right),
\end{align*}
for some constant $\bar{a}>0$ and all $x,y\in\gamma,\ \sigma_{1},\sigma_{2},s_{1},s_{2}\in S.$
\item there exist constants $c>0,R\ge2$ such that
\begin{equation}
|\phi(\sigma)|\leq c(1+|\sigma|^{R}),\ \sigma\in S;\label{bound1}
\end{equation}
\item there exists $b>0$ such that
\begin{equation}
(\sigma_{1}-\sigma_{2})(\phi(\sigma_{1})-\phi(\sigma_{2})\leq b(\sigma_{1}-\sigma_{2})_{.}^{2},\ \sigma_{1},\sigma_{2}\in S;\label{bound-diss}
\end{equation}
\item there exists constant $M>0$ such that
\begin{equation}
\left\vert \psi_{xy}(\sigma_{1},\xi_{1})-\psi_{xy}(\sigma_{2},\xi_{2})\right\vert \leq M\left(\left\vert \sigma_{1}-\sigma_{2}\right\vert +\left\vert \xi_{1}-\xi_{2}\right\vert \right),\label{cond-lip1}
\end{equation}
for $x,y\in\gamma,\ \sigma_{1},\sigma_{2},s_{1},s_{2}\in\mathbb{R}.$
\end{enumerate}
\end{condition}

Suppose that the drift and diffusion coefficients $\Phi_x$ and $\Psi_x$ have, respectively, the form

\begin{equation}
\Phi_{x}((z_{y})_{y\in\gamma},t):=\phi_{x}(z_{x})+\sum_{y\in\gamma_{x,t}}\phi_{xy}(z_{x},z_{y}),\text{for all }x\in\gamma_{t},\label{Phi-form1}
\end{equation}
and
\begin{equation}
\Psi_{x}((z_{y})_{y\in\gamma},t):=\sum_{y\in\gamma_{x,t}}\psi_{xy}(z_{x},z_{y})\ \text{for all }x\in\gamma_{t},\label{Psi-form1}
\end{equation}
and set $\Phi_{x}((z_{y})_{y\in\gamma},t)=\Psi_{x}((z_{y})_{y\in\gamma},t)=0$
for $x\not\in\gamma_{t}$.
\begin{lemma}
\label{DriftLemma} Suppose that $\sigma_{1},\sigma_{2}\in\mathbb{R}$
and $Z_{1},Z_{2}\in\mathbb{S}^{T}$. Then for all $x\in\gamma^{T}$
we have the following inequalities:
\begin{align*}
|\Psi_{x}(Z_{1},t)-\Psi_{x}(Z_{2},t)| & \leq M(n_{x,t}+1)|z_{1,x}-z_{2,x}|+M\sum_{y\in\gamma_{x}}|z_{1,y}-z_{2,y}|,\\[0.01in]
|\Psi_{x}(0,t)| & \leq Mn_{x,t},
\end{align*}
and
\[
|\Phi_{x}(Z_{1},t)|\leq c(1+|z_{1,x}|^{R})+\bar{a}n_{x,t}(1+2|z_{1,x}|)+\bar{a}\sum_{y\in\gamma_{x}}|z_{1,y}|,
\]
\begin{multline*}
(z_{1,x}-z_{2,x})(\Phi_{x}(Z_{1},t)-\Phi_{x}(Z_{2},t))\\
\leq(b+\frac{1}{2}+4\bar{a}^{2}n_{x,t}^{2})(z_{1,x}-z_{2,x})^{2}+\frac{1}{2}\bar{a}^{2}n_{x,t}\sum_{y\in\gamma_{x}}(z_{1,y}-z_{2,y})^{2},
\end{multline*}
where constants $M,c,b$ and $\bar{a}$ are defined in Assumption
\ref{mainass}.
\end{lemma}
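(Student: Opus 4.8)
The plan is to derive all four estimates by inserting the explicit forms \eqref{Phi-form1} and \eqref{Psi-form1} of the coefficients into the left-hand sides and applying the bounds of Condition \ref{mainass} term by term, using only the elementary counting facts $|\gamma_{x,t}|\le n_{x,t}$ and $\sum_{y\in\gamma_{x,t}}(\cdots)\le\sum_{y\in\gamma_{x}}(\cdots)$ (the latter valid because $\gamma_{x,t}\subset\gamma_{x}$ and every summand is nonnegative). The first three inequalities are routine triangle-inequality/growth estimates; the one-sided (dissipativity) bound is the real content.

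For the Lipschitz bound on $\Psi_x$, I would write $\Psi_x(Z_1,t)-\Psi_x(Z_2,t)=\sum_{y\in\gamma_{x,t}}\bigl(\psi_{xy}(z_{1,x},z_{1,y})-\psi_{xy}(z_{2,x},z_{2,y})\bigr)$, apply the triangle inequality together with \eqref{cond-lip1} to each summand, and separate the $|z_{1,x}-z_{2,x}|$ contributions (there are $|\gamma_{x,t}|\le n_{x,t}\le n_{x,t}+1$ of them) from the $|z_{1,y}-z_{2,y}|$ contributions. The bound $|\Psi_x(0,t)|\le Mn_{x,t}$ follows identically once $|\psi_{xy}(0,0)|\le M$ is available (a uniform bound on $\psi_{xy}$ at the origin, which I take from the standing assumptions), since then $|\Psi_x(0,t)|\le\sum_{y\in\gamma_{x,t}}|\psi_{xy}(0,0)|\le M|\gamma_{x,t}|$. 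For the growth bound on $\Phi_x$, I would estimate the self-term by the polynomial growth \eqref{bound1}, giving $|\phi(z_{1,x})|\le c(1+|z_{1,x}|^{R})$, and each interaction summand by the linear growth of $\varphi_{xy}$, collecting the constant and $|z_{1,x}|$ terms into $\bar a n_{x,t}(1+|z_{1,x}|)$ (dominated by the stated $\bar a n_{x,t}(1+2|z_{1,x}|)$) and the $|z_{1,y}|$ terms into $\bar a\sum_{y\in\gamma_x}|z_{1,y}|$.

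The dissipativity estimate is the crux. I would split
\[
(z_{1,x}-z_{2,x})\bigl(\Phi_x(Z_1,t)-\Phi_x(Z_2,t)\bigr)=A_{\mathrm{self}}+A_{\mathrm{int}},
\]
where $A_{\mathrm{self}}=(z_{1,x}-z_{2,x})(\phi(z_{1,x})-\phi(z_{2,x}))$ and $A_{\mathrm{int}}$ collects the pair terms. The self-part is controlled directly by the one-sided bound \eqref{bound-diss}, giving $A_{\mathrm{self}}\le b(z_{1,x}-z_{2,x})^2$. For $A_{\mathrm{int}}$, the uniform Lipschitz bound on $\varphi_{xy}$ yields $A_{\mathrm{int}}\le\bar a|z_{1,x}-z_{2,x}|\sum_{y\in\gamma_{x,t}}(|z_{1,x}-z_{2,x}|+|z_{1,y}-z_{2,y}|)$, which splits into a diagonal part $\bar a|\gamma_{x,t}|(z_{1,x}-z_{2,x})^2$ and a cross part $\bar a|z_{1,x}-z_{2,x}|\sum_{y\in\gamma_{x,t}}|z_{1,y}-z_{2,y}|$. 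I would treat the cross part by Young's inequality (equivalently Cauchy--Schwarz over the $\le n_{x,t}$ neighbours followed by Young), tuning the weight so that the neighbour contribution is exactly $\tfrac12\bar a^2 n_{x,t}\sum_{y}(z_{1,y}-z_{2,y})^2$, which leaves an $O(1)$ multiple of $(z_{1,x}-z_{2,x})^2$. Finally I would absorb the diagonal coefficient $\bar a|\gamma_{x,t}|\le\bar a n_{x,t}$ into the quadratic term using an elementary inequality of the type $\bar a n_{x,t}\le\tfrac14+4\bar a^2 n_{x,t}^2$, collecting all $(z_{1,x}-z_{2,x})^2$-coefficients into a self-coefficient of the stated form $b+\tfrac12+4\bar a^2 n_{x,t}^2$.

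The main obstacle is purely the constant bookkeeping in this last estimate: the Young weights must be chosen so that the neighbour term lands on the prescribed coefficient $\tfrac12\bar a^2 n_{x,t}$ while the leftover self-contributions (the diagonal $\bar a n_{x,t}$, the cross remainder, and the arithmetic--geometric slack) are simultaneously folded into $\tfrac12+4\bar a^2 n_{x,t}^2$; it is precisely this absorption that forces the quadratic dependence on $n_{x,t}$ in the self-coefficient. The three growth/Lipschitz bounds carry no difficulty beyond the triangle inequality and the counting $|\gamma_{x,t}|\le n_{x,t}$, the only delicate input there being the boundedness of $\psi_{xy}$ at the origin used for $|\Psi_x(0,t)|\le Mn_{x,t}$.
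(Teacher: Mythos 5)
Your proposal is correct and coincides with the paper's (unwritten) argument: the paper's proof consists of the single remark that the lemma ``follows by a direct calculation'', and the calculation it has in mind is exactly your term-by-term application of Condition \ref{mainass} to the explicit forms \eqref{Phi-form1}--\eqref{Psi-form1}, using $|\gamma_{x,t}|\le n_{x,t}$, the one-sided bound \eqref{bound-diss} for the self-interaction, and Young's inequality for the cross terms. You also correctly isolate the only two delicate points, which are really imprecisions of the statement rather than of your argument: the bound $|\Psi_{x}(0,t)|\le Mn_{x,t}$ requires the uniform bound $|\psi_{xy}(0,0)|\le M$, which is not literally contained in \eqref{cond-lip1}, and the absorption of the diagonal term $\bar{a}n_{x,t}$ into $4\bar{a}^{2}n_{x,t}^{2}$ costs an additive constant (of order $\tfrac{1}{16}$), so the self-coefficient one actually obtains is $b+\tfrac12+4\bar{a}^{2}n_{x,t}^{2}$ only up to an immaterial $O(1)$ adjustment that does not affect any subsequent use of the lemma.
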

\begin{proof}
The proof follows by a direct calculation using assumptions on $\Phi$
and $\Psi$.
\end{proof}
\smallskip{}

The specific form of the coefficients requires the development of a special
framework. Indeed, we will be looking for a solution of (\ref{MainSystem})
in a scale of expanding Banach spaces of weighted sequences, which
we introduce below.

We start with a general definition and consider a family $\mathfrak{B}$
of Banach spaces $B_{\alpha}$ indexed by $\alpha\in\mathcal{A}:=[\alpha_{\ast},\alpha^{\ast})$
with fixed $0\leq\alpha_{\ast},\alpha^{\ast}<\infty$, and denote
by $\left\Vert \cdot\right\Vert _{B_{\alpha}}$ the corresponding
norms. When speaking of these spaces and related objects, we will
always assume that the range of indices is $[\alpha_{\ast},\alpha^{\ast})$,
unless stated otherwise. The interval $\mathcal{A}$ remains fixed
for the rest of this work. We will also use the corresponding closed
interval $\mathcal{\bar{A}}:=[\alpha_{\ast},\alpha^{\ast}].$
\begin{definition}
\label{Defscale}The family $\mathfrak{B}$ is called a scale if
\[
B_{\alpha}\subset B_{\beta}\ {\text{and }}\left\Vert u\right\Vert _{B_{\beta}}\leq\left\Vert u\right\Vert _{B_{\alpha}}{\text{ for any }}\alpha<\beta,\ u\in B_{\alpha},\ \alpha,\beta\in\mathcal{\bar{A}},
\]
where the embedding means that $B_{\alpha}$ is a dense vector subspace
of $B_{\beta}$.
\end{definition}
We will use the following notations:
\[
{\overline{B}}:={\bigcup}_{\alpha\in\left[\alpha_{\ast},\alpha^{\ast}\right)}B_{{\alpha}},\quad\underline{B}:={\bigcap}_{\alpha\in\left(\alpha_{\ast},\alpha^{\ast}\right]}B_{{\alpha}}.
\]
\bigskip{}
We also introduce the spaces
\[
B_{\alpha+}:=\cap_{\beta>\alpha}B_{\beta}\ \text{and }B_{\beta-}:=\cup_{a<\beta}B_{\alpha}.
\]

The main scale we will be working with is given by the following spaces
of weighted sequences.
\begin{itemize}
\item \label{DefSequenceSpaceScale} For all $p>0$ and $\alpha\in\mathcal{A}$
let
\begin{align*}
l_{\mathfrak{a}}^{p} & :=\left\{ z\in\mathbb{R}^{\gamma}\ \left\vert \Vert z\Vert_{l_{\alpha}^{p}}:=\left(\sum_{x\in\gamma}w(x)^{-1}\mid z_{x}\mid^{p}\right)^{1/p}\le\infty\right.\right\} ,\ w(x)=e^{\alpha\mid x\mid},\\[1em]
\mathcal{L}^{p} & :=\{l_{\mathfrak{a}}^{p}\}_{\mathfrak{a}\in\mathcal{A}}
\end{align*}
be, respectively, a Banach space of weighted real sequences and a
scale of such spaces.
\item \label{Zspaces} For all $p>0$ and $\mathfrak{a}\in\mathcal{A}$
let $\mathcal{R}_{\mathfrak{a}}^{p}$ denote the Banach space of $l_{\mathfrak{a}}^{p}$-valued
adapted processes $\xi$ with finite norm
\[
\Vert\xi\Vert_{\mathcal{R}_{\mathfrak{a}}^{p}}:=\left(\sup\left\{ \mathbb{E}\Vert\xi\Vert_{l_{\mathfrak{a}}^{p}}^{p}\ \left\vert \ t\in\mathcal{T}\right.\right\} \right)^{\frac{1}{p}}<\infty,
\]
and let be $\mathcal{R}^{p}:=\{\mathcal{R}_{\mathfrak{a}}^{p}\}_{\mathfrak{a}\in\mathcal{A}}$
a scale of such spaces.
\end{itemize}
The main result of this section is the following theorem.
\begin{theorem}
\label{Existence} Suppose that Assumption \ref{cond-diss} holds.
Then, for all $p\geq R$ and any $\mathcal{F}_{0}$-measurable initial
condition $\bar{\zeta}:=(\zeta_{x})_{x\in\gamma^{T}}\in L_{\alpha}^{p}(\gamma^{T})$,
$\alpha\in\mathcal{A}$, stochastic system (\ref{MainSystem}) admits
a unique (up to indistinguishibility) strong solution $\Xi^T\in\mathcal{R}_{\alpha+}^{p}(\gamma^{T})$.
It satisfies the growth estimate
\begin{equation}
\mathbb{E}\left\Vert \Xi^T_{t}\right\Vert _{l_{\beta}^{p}(\gamma^{T})}^{p}\leq C_{1}K_{T}(\alpha,\beta)\left(\mathbb{E}\left\Vert \Xi^T_{0}\right\Vert _{l_{\alpha}^{p}(\gamma^{T})}^{p}+\left\Vert \bar{C}_{2}\right\Vert _{l_{\alpha}^{p}(\gamma^{T})}\right),\label{ineq222-1-1-1}
\end{equation}
where $K_{T}(\alpha,\beta)$ is defined by formulae (\ref{eq:K})
and (\ref{eq:L}) in the Appendix. Moreover, the map
\[
L_{\alpha}^{p}(\gamma^{T})\ni\bar{\zeta}\mapsto\Xi^T\in\mathcal{R}_{\beta}^{p}(\gamma^{T})
\]
is continuous for any $\beta>\alpha$.
\end{theorem}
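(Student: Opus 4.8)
The plan is to realise the infinite system \eqref{MainSystem} as a limit of finite subsystems, to derive uniform a priori bounds in the scale $\mathcal{L}^p$ from the dissipativity built into Lemma \ref{DriftLemma}, and to pass to the limit, with the characteristic loss $\beta>\alpha$ in the scale index accounting for the coordinate-dependent, unbounded constants. Concretely, I would first fix an increasing exhaustion by finite sets $\Lambda_n\uparrow\gamma^T$ and consider the finite-dimensional system obtained by retaining the coordinates $x\in\Lambda_n$ and freezing the remaining ones at their initial values. Since the interaction has finite range $\rho$, each retained coordinate couples to only finitely many others, so the truncated drift is locally Lipschitz with polynomial growth in the diagonal variable and globally Lipschitz in the interaction and diffusion terms, while \eqref{bound-diss} (equivalently the monotonicity estimate of Lemma \ref{DriftLemma}) makes the drift one-sided Lipschitz. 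Standard theory for finite-dimensional SDEs with monotone drift and locally Lipschitz coefficients of polynomial growth then yields a unique, non-exploding strong solution $\Xi^{(n)}$ for each $n$; the hypothesis $p\ge R$ is what lets the moment computation below close.

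The core of the argument is a uniform a priori estimate. I would apply It\^o's formula to a smooth regularisation of $e^{-\alpha|x|}|\xi^{(n)}_{x,t}|^p$, sum over $x\in\Lambda_n$, and take expectations, controlling the drift contribution by the monotonicity and growth bounds of Lemma \ref{DriftLemma} and the martingale part by the Burkholder--Davis--Gundy inequality together with the bounds $|\Psi_x(0,t)|\le Mn_{x,t}$ and the Lipschitz estimate carrying the factor $M(n_{x,t}+1)$. The two delicate features are the neighbour sums $\sum_{y\in\gamma_x}$, which upon re-indexing cost a factor $e^{\alpha\rho}$ from the finite range together with a combinatorial factor $n_{x,t}$, and the diagonal dissipativity constant $b+\tfrac12+4\bar a^2 n_{x,t}^2$, which is unbounded in $x$. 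Both are tamed by the $\mathrm{L}\Gamma(X)$ estimate of Definition \ref{def:log}: since $n_{x,t}\le a_R(1+\log(1+|x|))$ grows only poly-logarithmically, the weight ratio $e^{-(\beta-\alpha)|x|}\bigl(1+\log(1+|x|)\bigr)^k$ is bounded for every $k$ and every $\beta>\alpha$, so moving from weight $\alpha$ to weight $\beta$ absorbs these growing factors at the price of a finite constant. A Gronwall argument in the shifted norm then yields \eqref{ineq222-1-1-1} with a constant $K_T(\alpha,\beta)$ blowing up as $\beta\downarrow\alpha$, uniformly in $n$.

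With the uniform bound in hand, I would obtain convergence by showing that $(\Xi^{(n)})$ is Cauchy in $\mathcal{R}^p_\beta$: applying the same It\^o computation to the difference $\Xi^{(n)}-\Xi^{(m)}$ and using only the monotonicity estimate of Lemma \ref{DriftLemma}, which is quadratic in the differences and hence yields a closed Gronwall inequality in the shifted norm, one controls the difference by the initial data and by the coordinates in $\Lambda_n\setminus\Lambda_m$, which vanish in the limit. The limit $\Xi^T$ lies in $\mathcal{R}^p_{\alpha+}$ because $\beta>\alpha$ was arbitrary, and passing to the limit in the integral form of \eqref{MainSystem} identifies it as a strong solution. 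Uniqueness follows from the very same difference estimate applied to two hypothetical solutions, and continuity of the map $\bar\zeta\mapsto\Xi^T$ into $\mathcal{R}^p_\beta$ follows since that estimate is linear in the difference of initial data. In the write-up, this difference estimate and the explicit form of $K_T(\alpha,\beta)$ are precisely the abstract infinite-SDE machinery recorded in the Appendix through \eqref{eq:K} and \eqref{eq:L}, so the theorem reduces to verifying its hypotheses via Lemma \ref{DriftLemma} and Definition \ref{def:log}.

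I expect the main obstacle to be the uniform a priori estimate, specifically the simultaneous control of the unbounded diagonal constant $4\bar a^2 n_{x,t}^2$ and the off-diagonal coupling for infinitely many coordinates at once. A single Gronwall inequality with an $x$-independent rate is unavailable here; the resolution --- trading a shift $\beta>\alpha$ in the scale against the poly-logarithmic growth of $n_{x,t}$ guaranteed by Definition \ref{def:log} --- is the heart of the Ovsjannikov-type method and the reason the solution map is bounded only into the strictly larger space $\mathcal{R}^p_\beta$. A secondary technical point is the stochastic integral in $\ell^p$ with the growing Lipschitz constant $M(n_{x,t}+1)$, where the Burkholder--Davis--Gundy inequality must be applied coordinatewise and the resulting neighbour sums re-summed against the weights before the bound of Definition \ref{def:log} is invoked.
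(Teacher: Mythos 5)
Your overall architecture coincides with the paper's: finite cut-offs obtained by freezing the coordinates outside an exhaustion $\Lambda_n\uparrow X$ at their initial values (Appendix, system \eqref{FinVolSystem}), a uniform bound in $\mathcal{R}^p_\beta$ for every $\beta>\alpha$ obtained by trading the shift $\beta-\alpha$ in the exponential weight against the poly-logarithmic growth of $n_x(\gamma^T)$ guaranteed by Definition \ref{def:log} (this is exactly the Ovsjannikov-type Gronwall inequality of Lemma \ref{gron111}, with $K_T(\alpha,\beta)$ from \eqref{eq:K}--\eqref{eq:L}), convergence of the cut-off sequence, and uniqueness plus continuous dependence from the same difference estimate. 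Your reading of the role of $p\ge R$ also matches the paper's remark.

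The one place where your argument is thinner than the paper's, and where the authors explicitly say your route does not work, is the identification of the limit as a solution. You write that ``passing to the limit in the integral form of \eqref{MainSystem} identifies it as a strong solution.'' The paper states in Step~3 of its proof that ``the multiplicative noise term does not allow to achieve this by a direct limit transition.'' The difficulty is that convergence $\Xi^n\to\Xi^T$ in $\mathcal{R}^p_\beta$ controls only $\sup_t\mathbb{E}\|\Xi^n_t-\Xi^T_t\|^p_{l^p_\beta}$, i.e.\ it identifies the limit for each fixed $t$ up to a $t$-dependent null set and does not by itself produce a continuous adapted version satisfying \eqref{MainSystem} for all $t$ simultaneously almost surely, which is what the definition of a strong solution in Section \ref{sec:phantom_dynamics} requires; the convergence of the stochastic integrals with the state-dependent diffusion coefficient $\Psi_x$ must be matched with a continuous modification of the limit. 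The paper resolves this by a tagged-particle argument: for each $x$ one solves the scalar SDE for $\eta_t$ with the neighbouring coordinates $\xi_{y,\cdot}$, $y\ne x$, frozen at their limiting values, and then proves $\eta_t=\xi_{x,t}$, thereby exhibiting a continuous solution coordinate by coordinate. Your proposal needs this (or an equivalent uniform-in-$t$ convergence argument) inserted at that step; otherwise the rest is sound and follows the paper's proof, which itself defers the detailed estimates to \cite{ChDa}.
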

\begin{remark}
Assumption $p\geq R$ ensures that given $\xi\in\mathcal{R}_{\beta}^{p}$
the random variable $\phi(\xi_{t}),t\geq0$, is integrable.
\end{remark}
\begin{proof}
In the case of the drift and diffusion coefficients $\Phi_{x}$ and
$\Psi_{x}$ formed using time-independent functions $a,\phi$ and
$\psi$ instead of $\hat{a},\hat{\phi}$ and $\hat{\psi}$, the result
was proved in \cite{ChDa}. Observe that the functions $\hat{a},\hat{\phi}$
and $\hat{\psi}$ satisfy the bounds of Assumption \ref{mainass}
with the same universal constants $\bar{a},b,c$ and $M$. Therefore
the proof can be extended to this case in a straightforward manner.
We present here only its sketch.

\textbf{Step 1.} Consider a sequence of processes $\left\{ \Xi_{t}^{n}\right\} _{n\in\mathbb{N}}$,
$t\in\mathcal{T}$, that solve finite cut-offs of system (\ref{MainSystem}),
obtained by setting $\xi_{x,t}=\zeta_{x}$ for $x\in\gamma^{T}$ such
that $\left\vert x\right\vert >n$. A more detailed description of
the construction of that sequence is given in Appendix.

\textbf{Step 2.} Prove a uniform bound of the family $\left\{ \Xi_{t}^{n}\right\} _{n\in\mathbb{N}}$
in $\mathcal{R}_{\beta}^{p}$ for any $\beta>\alpha$. This is a difficult
step requiring use of our version of the comparison theorem and Gronwall-type
inequality in the scale of spaces, which is in turn based on the Ovsjannikov
method, see \cite{ChDa}.

\textbf{Step 3.} The uniform bound above implies convergence of the sequence
$\Xi^{n}$, $n\rightarrow\infty$, to a process $\Xi^T\in\mathcal{R}_{\beta}^{p}$,
$\beta>\alpha$. Our next goal is to prove that this process solves
system (\ref{MainSystem}). The multiplicative noise term does not
allow to achieve this by a direct limit transition. To proceed, we
construct an $\mathbb{R}$-valued process $\eta_{t}$ that solves
an equation describing the dynamics of a tagged particle $x$ while
processes $\xi_{y,t}$, $y\in\gamma,y\neq x$, are fixed, and prove
that $\eta_{t}=\xi_{x,t}$.

\textbf{Step 4.} The uniqueness and continuous dependence on the initial
data is proved by using our version the comparison theorem and Gronwall-type
inequality (cf. \eqref{gron111}), as in \cite{ChDa}.
\end{proof}
\begin{remark}
The convergence $\Xi^{n}\rightarrow\Xi^T$ in $\mathcal{R}_{\beta}^{p}$
implies that, for any fixed $x\in\gamma^{T}$and $t\in[0,T],$ we
have $\xi_{x,t}^{n}\rightarrow\xi_{x,t}$, $n\rightarrow\infty$,
a.s.
\end{remark}

\begin{remark}\label{rem:proj}
Let $T_1<T$ and consider the natural projection $P_{T\rightarrow T_1}:{\mathbb{S}}^T\rightarrow {\mathbb{S}^{T_1}} $, induced by the embedding $\gamma^{T_1}\subset \gamma^T$. Now we can construct the process $\Xi^{T_1}$,  $t\in [0,T_1]$. The uniqueness of the solution implies that, for $t\in [0,T_1]$, we have
\begin{equation}
\Xi^{T_1}_t=P_{T\rightarrow T_1}\Xi^{T}_t.
\end{equation}
In particular, for any $t\in [0,T)$,
\begin{equation}
  \Xi^{t}_t=P_{T\rightarrow t}\Xi^{T}_t
\end{equation}
so that
\begin{equation}
    \xi_{x,t}^t=\xi_{x,t}^T,\,x\in\gamma^t.
\end{equation}
In what follows, we skip the superscript $T$ (or $t$) and just write $\xi_{x,t}$ for $x\in\gamma^t$.
\end{remark}

\begin{remark}\label{rem:gamma-dep}
    We will sometimes use the notation $\Xi_t(\gamma_t)$, to highlight the dependence of the solution $\Xi_t$ on the underlying process $\gamma_s,\ s\le t$.
\end{remark}

\begin{remark}
    Observe that, at any fixed time $t\in\left[0,T\right]$, the spin
dynamics is coupled only for the particles with positions $x\in\gamma_{t}$.
\end{remark}

\begin{remark}\label{rem:gamma-const}
 For a constant process $\gamma_t\equiv\gamma$,   the coefficients of system \eqref{MainSystem} are time-independent. Thus $\Xi_t(\gamma)$ is a homogeneous Markov process in $S^{\gamma^T}$ with generator
 $H^{\Xi(\gamma)}$ acting on cylinder functions $f\in {\mathcal{FC}}(S^{\gamma^T})$ as
 \begin{equation}\label{eq:xi-gen}
 H^{\Xi(\gamma)}f(\sigma)= \sum_{y\in \gamma}\Phi_y(\sigma)\nabla_x f(\sigma)+  \sum_{y\in \gamma}\Psi_y(\sigma)\Delta_x f(\sigma) .
 \end{equation}
\end{remark}

\section{Dynamics in $\Gamma(X,S)$.}\label{sec:dynamics}

We are now in a position to construct a process in $\Gamma(X,S)$
by combining processes $\gamma_{t}$ and $\Xi_{t}^{T}=\left(\xi_{x,t}\right)_{x\in\gamma^T}$, $t\in\left[0,T\right]$, the latter given in Theorem \ref{Existence}. We will use the fibration structure described in Remark \ref{fibre}.

\begin{theorem}\label{theor:solution}
For any $(\gamma_t)_{t\in [0,T]}\in\mathcal{R}(\mathrm{L}\Gamma(X))$, $\widehat{\gamma}_0\in \Gamma(X,S)$ and a measurable map $\varphi:X\rightarrow S$,     there exists a  unique (up to modification) adapted $\Gamma(X,S)$-valued process $(\widehat{\gamma}_{t})_{t\in[0,T]}$ on the probability space $\bf P$
with initial value   $\widehat{\gamma}_0$ and such that, a.s. for all $t\in[0,T]$,
\begin{enumerate}
\item[(1)]  $p_{X}(\widehat{\gamma}_{t})=\gamma_{t}$, and
\item[(2)]  $i_{\gamma_t}(\widehat{\gamma}_{t})=\left(\xi_{x,t}\right)_{x\in\gamma_t}$,
\end{enumerate}
where $\Xi_{t}^{T}=\left(\xi_{x,t}\right)_{x\in\gamma^{T}}$
is the solution of system \eqref{MainSystem} along $\gamma_{t}$,
with $\xi_{x,0}=\varphi(x)$ for $x\notin\gamma_0$.
The process $\widehat{\gamma}_{t},\,t\in[0,T]$,
is càdlàg.
\end{theorem}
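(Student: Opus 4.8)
The plan is to construct $\widehat\gamma_t$ fibrewise, placing the spin dynamics of Theorem~\ref{Existence} over the fixed sample path of $\gamma_t$, and then to read off every required property from these two ingredients. Since $(\gamma_t)\in\mathcal{R}(\mathrm{L}\Gamma(X))$, Condition~\ref{cond-log-1} guarantees $\gamma^T\in\mathrm{L}\Gamma(X)\subset\Gamma(X)$, so the Phantom Space framework of Section~\ref{sec:phantom_dynamics} applies. First I would fix an initial datum on the whole phantom by setting $\zeta_x:=\sigma_x$ for $x\in\gamma_0$, where $i_{\gamma_0}(\widehat\gamma_0)=(\sigma_x)_{x\in\gamma_0}$, and $\zeta_x:=\varphi(x)$ for $x\in\gamma^T\setminus\gamma_0$. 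Assuming $\bar\zeta:=(\zeta_x)_{x\in\gamma^T}\in L_\alpha^p(\gamma^T)$ for some $\alpha\in\mathcal{A}$ and $p\ge R$ (a mild admissibility requirement on $\varphi$ and $\widehat\gamma_0$), Theorem~\ref{Existence} produces a unique strong solution $\Xi^T_t=(\xi_{x,t})_{x\in\gamma^T}$ of \eqref{MainSystem} along $\gamma_t$. I would then define, pathwise,
\[
\widehat\gamma_t:=\{(x,\xi_{x,t}):x\in\gamma_t\},\qquad t\in[0,T].
\]
Because $p_X(\widehat\gamma_t)=\gamma_t\in\Gamma(X)$ is locally finite, $\widehat\gamma_t\in\Gamma(X,S)$, and properties (1), (2) hold by construction through the isomorphism \eqref{eq:fibre1}.

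For adaptedness I would check measurability against the generating counting variables of \eqref{ng}: for any $\Delta$ with $p_X(\Delta)\in\mathcal{B}_0(X)$,
\[
N(\widehat\gamma_t\cap\Delta)=\sum_{x\in\gamma_t\cap p_X(\Delta)}\1_\Delta(x,\xi_{x,t})
\]
is a finite sum — by local finiteness of $\gamma_t$ — of $\mathcal{F}_t$-measurable terms, as $\gamma_t$ is adapted and each $\xi_{x,t}$ is adapted by Theorem~\ref{Existence}. Hence $\widehat\gamma_t$ is an adapted $\Gamma(X,S)$-valued process.

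The càdlàg property is the main obstacle, as it requires reconciling the jumps of the base process with the weak $\tau$-topology of the total space. As the $\tau$-topology is the initial topology generated by the maps $\widehat\gamma\mapsto\langle g,\widehat\gamma\rangle$ of \eqref{top}, it suffices to show that for every bounded continuous $g$ with $\mathrm{supp}\,g\subset\Lambda\times S$, $\Lambda\in\mathcal{B}_0(X)$, the scalar path
\[
t\mapsto\langle g,\widehat\gamma_t\rangle=\sum_{x\in\gamma_t\cap\Lambda}g(x,\xi_{x,t})
\]
is càdlàg and, crucially, that its one-sided limits are realised by genuine elements of $\Gamma(X,S)$. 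By Condition~\ref{cond-fin}, $\gamma_t\cap\Lambda$ is piecewise constant with finitely many jumps on $[0,T]$, so on each inter-jump interval the sum runs over a fixed finite index set and each summand $t\mapsto g(x,\xi_{x,t})$ is continuous: the strong-solution paths $t\mapsto\xi_{x,t}$ are continuous, and the prescription \eqref{Phi-form1}, \eqref{Psi-form1} merely freezes $\xi_{x,\cdot}$ while $x\notin\gamma_t$, so no spin trajectory ever jumps. Therefore all discontinuities of $\widehat\gamma_t$ stem solely from births and deaths in $\gamma_t$. Using that $\gamma_t$ is càdlàg in $\Gamma(X)$ (inherent in the jump-process structure of $\mathcal{R}(\mathrm{L}\Gamma(X))$ and verified for the examples of Section~\ref{Sec DoC}), right-continuity yields $\gamma_s\cap\Lambda=\gamma_t\cap\Lambda$ for $s\downarrow t$, whence $\langle g,\widehat\gamma_s\rangle\to\langle g,\widehat\gamma_t\rangle$; for $s\uparrow t$ the index set stabilises at $\gamma_{t-}\cap\Lambda$, so the left limit exists and equals $\langle g,\widehat\gamma_{t-}\rangle$ with $\widehat\gamma_{t-}:=\{(x,\xi_{x,t}):x\in\gamma_{t-}\}\in\Gamma(X,S)$. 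Since this holds simultaneously for all test functions $g$, the initial-topology characterisation of convergence upgrades these pairings to one-sided limits in $\Gamma(X,S)$, establishing that $\widehat\gamma_t$ is càdlàg.

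Finally, uniqueness follows from the rigidity of the fibre bundle. If $\widehat\gamma'_t$ is any adapted process satisfying (1) and (2), then $p_X(\widehat\gamma'_t)=\gamma_t$ and $i_{\gamma_t}(\widehat\gamma'_t)=(\xi_{x,t})_{x\in\gamma_t}$ force, via the unique pair representation of Remark~\ref{fibre}, $\widehat\gamma'_t=\widehat\gamma_t$ a.s.\ for each fixed $t$; together with the indistinguishability of the spin solution granted by Theorem~\ref{Existence}, this shows the two processes are modifications of one another. I expect the genuine difficulty to be concentrated entirely in the càdlàg step — in particular, confirming that weak convergence of all test-function pairings truly produces one-sided limits inside $\Gamma(X,S)$ — while construction, adaptedness and uniqueness are direct consequences of Theorem~\ref{Existence} and the fibration structure of Remark~\ref{fibre}.
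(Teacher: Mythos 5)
Your construction, càdlàg argument and uniqueness argument follow essentially the same route as the paper: build $\widehat\gamma_t=(\gamma_t,\Xi_t)$ fibrewise from the phantom-space solution of Theorem \ref{Existence}, test the càdlàg property against the pairings $t\mapsto\sum_{x\in\gamma_t}F(x,\xi_{x,t})$ using Condition \ref{cond-fin} and continuity of the spin paths, and derive uniqueness from the uniqueness in Theorem \ref{Existence} together with the fibre representation of Remark \ref{fibre}. Your explicit identification of the left limit as $\{(x,\xi_{x,t}):x\in\gamma_{t-}\}$ and your remark on the admissibility requirement $\bar\zeta\in l^p_\alpha(\gamma^T)$ (implicit in the paper via Remark \ref{rem:ini}) are welcome additions.

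There is, however, a genuine gap in your adaptedness step. You write that $N(\widehat\gamma_t\cap\Delta)=\sum_{x\in\gamma_t\cap p_X(\Delta)}\1_\Delta(x,\xi_{x,t})$ is ``a finite sum of $\mathcal{F}_t$-measurable terms, as $\gamma_t$ is adapted and each $\xi_{x,t}$ is adapted.'' This does not go through as stated: the index set of the sum is random, and the family $(\xi_{x,t})$ is indexed by the random points of $\gamma^T(\omega)$ --- there is no fixed countable collection of adapted processes being summed. For a fixed $x\in X$ the process $\xi_{x,t}$ is only defined on the event $\{x\in\gamma^T\}$, and its law depends on the whole realisation of $(\gamma_s)_{s\le t}$, so the joint measurability of $\omega\mapsto(\gamma_t(\omega),\Xi_t(\omega))$ as a $\Gamma(X,S)$-valued map is precisely the nontrivial point. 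The paper devotes Proposition \ref{thm:embed1-1} to it, and the proof there is not a one-liner: one approximates $\Xi$ by the finite-volume cut-offs $\Xi^n$ of Appendix \ref{sec:cutoff}, shows that for fixed $\gamma$ the map $\gamma\mapsto(\gamma,\Xi^n_t(\gamma))$ is continuous on $\Gamma(X)$ (using stabilisation of converging configurations on compacts and continuous dependence of finite SDE systems on the positions $x_1,\dots,x_k$), uses the piecewise constancy of $t\mapsto\gamma_t\cap B_n$ to reduce the time-dependent case to the constant-$\gamma$ case, and only then passes to the limit $n\to\infty$ using the a.s.\ convergence $\xi^n_{x,t}\to\xi_{x,t}$. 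Your proposal needs an argument of this kind (or some substitute establishing joint measurability of the random index set and the random field over it); without it the claim that $\widehat\gamma_t$ is an adapted $\Gamma(X,S)$-valued random variable is unsupported.
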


    \begin{remark} \label{rem:ini}
The construction of process $\Xi_{t}^{T}$ requires initial condition \\
 $(\zeta_{x})_{x\in\gamma^{T}}\in \mathbb{S}^{T}$ and not just $(\zeta_{x})_{x\in\gamma_0}=i_{\gamma_0}(\widehat{\gamma}_{0})$. We consider the case where $(\zeta_{x})_{x\notin\gamma_0}$ is specified by a deterministic function $\varphi$, reflecting the properties of the environment in which the underlying process $\gamma_t$ lives. Alternatively, $(\zeta_{x})_{x\notin\gamma_0}$ can be defined
by a collection of i.i.d. random variables; another possibility would be to replace the underlying process $\gamma_t$ by a BaD process $(\gamma,\zeta)_t\in \Gamma(X,S)$, so that a new point $x$ appears together with the corresponding mark $\zeta_x$.
\end{remark}

\begin{proof}
Let us fix a realisation of the process $\gamma_t$ with
$\gamma_0=p_X(\widehat{\gamma}_0)$ and consider its phantom track $\gamma^T$. According to Theorem \ref{Existence}, there exists the unique solution $\Xi_{t}^{T}=\left(\xi_{x,t}\right)_{x\in\gamma^{T}}$
 of system \eqref{MainSystem} along $\gamma_{t}$.

 We set now $\Xi_{t}:=\left(\xi_{x}(t)\right)_{x\in\gamma_{t}}\in S^{\gamma_{t}}$
and define
 $\widehat{\gamma}_{t}$  by the formula
\begin{equation*}
\widehat{\gamma}_{t}=\left(\gamma_{t},\Xi_{t}\right)\in\Gamma(X,S),\ t\in[0,T],\label{eq:comb-proc}
\end{equation*}
cf. Remark \ref{fibre}.
Proposition \ref{thm:embed1-1} below shows that $\widehat{\gamma}_{t}$ is a random variable on $\mathbb{P}$. It is obvious that $\widehat{\gamma}_{t}$ satisfies conditions (1) and (2). The càdlàg property is shown in Proposition \ref{prop:cadlag} below. The uniqueness follows from the uniqueness of solutions of system \eqref{MainSystem}.

\end{proof}

\begin{proposition}
\label{thm:embed1-1}The map $\Omega\ni\omega\mapsto$$(\gamma_{t},\Xi_{t})\in\Gamma(X,S)$
is measurable for any $t\in[0,T]$.
\end{proposition}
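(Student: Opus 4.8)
The plan is to verify measurability directly against the generators of $\mathcal{B}(\Gamma(X,S))$. By \eqref{ng} this $\sigma$-algebra is generated by the counting variables $\widehat{\gamma}\mapsto N(\widehat{\gamma}\cap\Delta)$ with $p_X(\Delta)\in\mathcal{B}_0(X)$, so it suffices to show that $\omega\mapsto N(\widehat{\gamma}_t\cap\Delta)$ is $\mathcal{F}$-measurable for every such $\Delta$. Fixing a bounded $\Lambda\in\mathcal{B}_0(X)$ and letting $\mathcal{D}$ be the collection of Borel $\Delta\subset\Lambda\times S$ for which this holds, one checks that $\mathcal{D}$ is a Dynkin system: it is stable under disjoint unions and monotone limits since $N(\widehat{\gamma}_t\cap\,\cdot\,)$ is additive and continuous along such limits, and stable under complementation in $\Lambda\times S$ because $N(\widehat{\gamma}_t\cap(\Lambda\times S))=N(\gamma_t\cap\Lambda)$ is already measurable (as $\omega\mapsto\gamma_t$ is $\Gamma(X)$-measurable). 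Since the product sets form a $\pi$-system generating $\mathcal{B}(\Lambda\times S)$, it remains only to treat $\Delta=\Lambda'\times A$ with $\Lambda'\in\mathcal{B}_0(X)$, $A\in\mathcal{B}(S)$, where
\[
N(\widehat{\gamma}_t\cap(\Lambda'\times A))=\#\{x\in\gamma_t\cap\Lambda':\xi_{x,t}\in A\}.
\]

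Second, I would trivialise the random index set by a measurable enumeration. Since $\gamma^T=\bigcup_{t}\gamma_t$ is, by Conditions \ref{cond-fin} and \ref{cond-log-1}, a.s.\ a locally finite configuration and $\omega\mapsto\gamma^T$ is measurable, the phantom is a point process and hence admits a measurable enumeration $\gamma^T=\{X_i(\omega)\}_{i\in\mathbb{N}}$ with each $X_i:\Omega\to X$ measurable. As $\gamma_t\subset\gamma^T$, every point of $\gamma_t$ occurs among the $X_i$, so
\[
N(\widehat{\gamma}_t\cap(\Lambda'\times A))=\sum_{i\in\mathbb{N}}\1[X_i\in\gamma_t]\,\1[X_i\in\Lambda']\,\1[\xi_{X_i,t}\in A],
\]
an a.s.\ finite sum (finitely many $X_i$ lie in $\Lambda'$). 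Here $\1[X_i\in\Lambda']$ is measurable because $X_i$ is, and $\1[X_i\in\gamma_t]$ is measurable because point membership $\{(\gamma,x):x\in\gamma\}$ is a measurable subset of $\Gamma(X)\times X$ while both $\omega\mapsto\gamma_t$ and $\omega\mapsto X_i$ are measurable.

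The crux is therefore to show that $\omega\mapsto\xi_{X_i(\omega),t}(\omega)$ is measurable for each fixed $i$, i.e.\ to obtain joint measurability of the marks over the random index set $\gamma^T$. My plan is to transport system \eqref{MainSystem} to the fixed index set $\mathbb{N}$ along the enumeration: set $\widetilde{W}_{i,\cdot}:=W_{X_i,\cdot}$, and let the couplings be governed by the neighbour relation $X_j\in\gamma_{X_i,t}$ (that is, $X_j\in\gamma_t$ with $0<|X_i-X_j|\le\rho$). All of these are measurable in $\omega$, and the coefficients \eqref{Phi-form1}--\eqref{Psi-form1} become measurable functions of the indices while still satisfying Condition \ref{mainass} with the same constants. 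Hence the same existence and uniqueness theory (Theorem \ref{Existence}) applies to the transported system and yields a unique strong solution $(Y_{i,t})_{i\in\mathbb{N}}$, which is adapted, so $\omega\mapsto Y_{i,t}$ is measurable for each fixed $i,t$. The pair $(\{X_i\},\{Y_{i,\cdot}\})$ is a strong solution of \eqref{MainSystem} indexed by $\gamma^T$, so uniqueness forces $Y_{i,t}=\xi_{X_i,t}$ a.s., giving the required measurability. (Alternatively, one argues along the cut-offs $\Xi^n$ of Step 1 in the proof of Theorem \ref{Existence}: each $\xi^n_{X_i,t}$ is measurable as a component of a finite-dimensional SDE with measurable coefficients, and $\xi^n_{X_i,t}\to\xi_{X_i,t}$ a.s.) Combining the three steps, $N(\widehat{\gamma}_t\cap(\Lambda'\times A))$ is a countable sum of measurable indicators, and the Dynkin argument of the first step then delivers the measurability of $\omega\mapsto(\gamma_t,\Xi_t)\in\Gamma(X,S)$.

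I expect the main obstacle to be exactly this joint measurability, and within it the bookkeeping needed to guarantee that the transported data — the neighbour structure and the coefficient families $\phi_{X_iX_j},\psi_{X_iX_j}$ — depend measurably on $\omega$ through the enumeration; the reduction to products and the point-membership measurability are routine monotone-class and point-process facts.
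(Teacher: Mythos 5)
Your outer reduction is sound and genuinely different from the paper's: you pass through the generating counting variables \eqref{ng}, a Dynkin-class argument, and a measurable enumeration of the phantom, whereas the paper reduces measurability to that of the pairings $\omega\mapsto\sum_{x\in\gamma_t}F(x,\xi_{x,t})$ for $F\in C_{0,b}(X\times S)$ and then works with the finite-volume cut-offs $\Xi^n$: it first shows $(\gamma_t,\Xi^n_t)\to(\gamma_t,\Xi_t)$ a.s.\ in $\Gamma(X,S)$, then proves continuity of $\gamma\mapsto(\gamma,\Xi^n_t(\gamma))$ for constant $\gamma$, and finally upgrades this to measurability in $\omega$ using that $t\mapsto\gamma_t\cap B_n$ is piecewise constant. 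Both reductions lead to the same crux, namely joint measurability of the marks over the random index set, and it is there that your main argument has a gap.

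Theorem \ref{Existence} is proved for a \emph{fixed realisation} of $\gamma_t$ (Section \ref{sec:phantom_dynamics} opens by fixing the path and its phantom), so the adaptedness of its solution is a statement conditional on the $\gamma$-path; it does not by itself give measurability of $\omega\mapsto\xi_{X_i(\omega),t}(\omega)$ jointly in the $\gamma$- and $W$-randomness --- that joint measurability is precisely what Proposition \ref{thm:embed1-1} asserts. Invoking ``the same existence and uniqueness theory'' for the transported system with $\omega$-dependent coefficients therefore begs the question: one would need a version of Theorem \ref{Existence} for random coefficients in which the whole construction (cut-offs, Ovsjannikov--Gronwall bounds, limit passages) is checked to preserve joint measurability. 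There is also an adaptedness problem with $\widetilde W_{i,\cdot}:=W_{X_i,\cdot}$: the enumeration $X_i$ of $\gamma^T$ is $\mathcal{F}_T$-measurable, so unless it is arranged to coincide with the fixed labelling already used to attach the Wiener processes, the stochastic integrals of the transported system acquire anticipating index assignments. Your parenthetical fallback via the cut-offs is the viable route and is essentially the paper's proof, but as stated it suppresses the two ingredients that make it work: continuity (hence measurability) of the finite-dimensional solution in the underlying configuration, and the piecewise constancy of $t\mapsto\gamma_t\cap\Lambda$ guaranteed by Condition \ref{cond-fin}, which together render the cut-off coefficients, and hence each $\xi^n_{X_i,t}$, measurable in $\omega$ before passing to the a.s.\ limit.
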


\begin{proof}
\textbf{Step 1}. Consider the solution $\Xi=\Xi(\gamma_{t})$ and
the corresponding approximating sequence $\Xi^{n}=\Xi^{n}(\gamma_{t})$,
see Appendix \ref{sec:cutoff}. We will first show that, for any fixed $t\in[0,T]$,
\begin{equation}
(\gamma_{t},\Xi_{t}^{n})\rightarrow(\gamma_{t},\Xi_{t}),\ n\rightarrow\infty,\label{eq:conv1-1}
\end{equation}
a.s. in the topology of $\Gamma(X,S).$ We know that
\[
\xi_{x,t}^{n}\rightarrow\xi_{x,t},\ n\rightarrow\infty,
\]
for any $x\in\gamma,$a.s. Thus, for any function $F\in C_{0,b}(X,S),$we
have the convergence
\[
\sum_{x\in\gamma_{t}}F(x,\xi_{x,t}^{n})\rightarrow\sum_{x\in\gamma_{t}}F(x,\xi_{x,t}),\ n\rightarrow\infty,
\]
because the number of non-zero terms in both sums is finite, which
implies (\ref{eq:conv1-1}).

\textbf{Step 2}. Assume that $\gamma_{t}\equiv\gamma$ is independent
of $t$ , fix $n\in\mathbb{N}$ and prove that the map
\begin{equation}
\Gamma(X)\in\gamma\mapsto(\gamma,\Xi_{t}^{n}(\gamma))\in\Gamma(X,S)\label{eq:conv2-2}
\end{equation}
is continuous.

Consider a sequence $\left\{ \gamma^{m},m\in\mathbb{N}\right\} \in\Gamma(X)$
converging to $\gamma$ as $m\rightarrow\infty.$ It is sufficient
to show that
\begin{equation}
\sum_{x\in\gamma^{m}}F(x,\xi_{x,t}^{n}(\gamma^{m}))\rightarrow\sum_{x\in\gamma}F(x,\xi_{x,t}^{n}(\gamma)),\ m\rightarrow\infty,\label{eq:F-conv-1}
\end{equation}
for any function $F\in C_{0,b}(X,S).$ Let $B$ be a compact subset
of $X$ such that $supp\ F\subset B\times S$ and denote $B_{n}:=B\cup\Lambda_{n}$.
Then, for $m$ big enough, this sequence stabilizes in $B_{n},$that
is, there exist $M,k\in\mathbb{N}$ such that
\[
\gamma\cap B_{n}=\left\{ x_{1},...,x_{k}\right\} \ and\ \gamma^{m}\cap B_{n}=\left\{ x_{1}^{m},...,x_{k}^{m}\right\}
\]
for all $m\ge M$, and $x_{j}^{m}\rightarrow x_{j},\ m\rightarrow\infty,\ j=1,...,k$.
Therefore, $\xi_{x_{j}}^{n}(\gamma^{m}),\ j=1,...,k$, $m\ge M$,
is a solution of the system of $k$ SDEs in $S,$with the coefficients
continuously depending on $x_{1}^{m},...,x_{k}^{m}$. Thus
\[
\xi_{x_{j}^{m}}^{n}(\gamma^{m})\rightarrow\xi_{x_{j}}^{n}(\gamma),\ m\rightarrow\infty,\ j=1,...,k.
\]
Then
\[
F(x_{j}^{m},\xi_{x_{j}^{m}}^{n}(\gamma^{m}))\rightarrow F(x_{j},\xi_{x_{j}}^{n}(\gamma))\ m\rightarrow\infty,\ j=1,...,k,
\]
and (\ref{eq:F-conv-1}) holds, which implies the continuity of (\ref{eq:conv2-2})
by the definition of the topology of $\Gamma(X,S)$.

\textbf{Step 3. }We continue working with fixed $n\in\mathbb{N}$,
but now return to the general setting where  $\gamma_{t}$ is time-dependent and
show that the map
\begin{equation}
\Omega\ni\omega\mapsto(\gamma,\Xi_{t}^{n}(\gamma))\in\Gamma(X,S)\label{eq:conv-3}
\end{equation}
 is measurable. It is sufficient to show that, for any function $F\in C_{0,b}(X,S)$
and $t\in\mathcal{T},$ the map
\[
\Omega\ni\omega\mapsto\sum_{x\in\gamma_{t}}F(x,\xi_{x,t}^{n})\in\mathbb{R}
\]
is measurable. Let us fix arbitrary $t_{0}\in[0,T]$. Then there exists
$\Delta t$ such that $\gamma_{t}\cap B_{n}\equiv\gamma$ for $t\in[t_{0},t_{0}+\Delta t]\subset\left[0,T\right]$,
and $\xi_{x,t}^{n}(\gamma_{t})=\xi_{x,t}^{n}(\gamma)$ for $x\in\gamma_{t}\cap B_{n}$.
The desired measurability follows now from Step 2 combined with the
measurability of the process $\gamma_{t}\in\Gamma(X)$.

\textbf{Step 4.} The statement of the theorem follows now by the combination
of (\ref{eq:conv1-1}) and (\ref{eq:conv-3}).
\end{proof}
\smallskip{}

\begin{proposition}\label{prop:cadlag}
The process \textup{$\widehat{\gamma}_{t},\ t\in[0,T],$} is càdlàg.
\end{proposition}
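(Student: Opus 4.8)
The plan is to reduce this configuration-valued assertion to the behaviour of the scalar evaluation paths. By definition the $\tau$-topology on $\Gamma(X,S)$ is the initial topology generated by the functionals $\widehat\gamma\mapsto\langle g,\widehat\gamma\rangle$ with $g\in C_{0,b}(X,S)$ spatially compactly supported, and in an initial topology a net converges if and only if its image under each generating functional converges. Hence it suffices to show, for every admissible $g$, that the real path $t\mapsto\langle g,\widehat\gamma_t\rangle=\sum_{x\in\gamma_t}g(x,\xi_{x,t})$ is right-continuous with left limits, and to exhibit one element of $\Gamma(X,S)$ realizing all the left limits at a given time simultaneously. I would first fix $\omega$ outside a single null set on which (i) Condition \ref{cond-fin} holds for every ball $\Lambda_k:=B_{0,k}$, $k\in\mathbb{N}$ (a countable intersection of a.s.\ events), and (ii) by Theorem \ref{Existence} every coordinate $t\mapsto\xi_{x,t}$, $x\in\gamma^T$, is continuous on $[0,T]$.

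Next I would analyse the scalar path for a fixed $g$ supported in $\Lambda\times S$ with $\Lambda$ compact, $\Lambda\subset\Lambda_k$. Then $\langle g,\widehat\gamma_t\rangle=\sum_{x\in\gamma_t\cap\Lambda}g(x,\xi_{x,t})$ is a finite sum, and by Condition \ref{cond-fin} the map $t\mapsto\gamma_t\cap\Lambda$ is piecewise constant with finitely many jumps. Thus $[0,T]$ splits into finitely many intervals on each of which $\gamma_t\cap\Lambda\equiv\{x_1,\dots,x_m\}$ is fixed and the path equals $\sum_{j=1}^m g(x_j,\xi_{x_j,t})$, a finite sum of continuous functions, hence continuous. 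So each scalar path is piecewise continuous with finitely many jumps; taking the right-continuous version of $\gamma_t$, it is càdlàg.

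It then remains to identify the limiting configurations. For right-continuity at $t_0$, on $[t_0,t_0+\delta)$ we have $\gamma_t\cap\Lambda=\gamma_{t_0}\cap\Lambda$, so continuity of the spins gives $\langle g,\widehat\gamma_t\rangle\to\langle g,\widehat\gamma_{t_0}\rangle$ for every $g$, and the initial-topology criterion yields $\widehat\gamma_t\to\widehat\gamma_{t_0}$. For the left limit, let $\gamma_{t_0-}$ be the left limit of $\gamma_t$ in $\Gamma(X)$: it exists and is locally finite because $\gamma_{t_0-}\cap\Lambda_k$ equals the finite value taken by $\gamma_t\cap\Lambda_k$ on the interval immediately preceding $t_0$, for every $k$. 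Setting $\widehat\gamma_{t_0-}:=(\gamma_{t_0-},(\xi_{x,t_0})_{x\in\gamma_{t_0-}})\in\Gamma(X,S)$, the identity $\gamma_t\cap\Lambda=\gamma_{t_0-}\cap\Lambda$ on $(t_0-\delta,t_0)$ together with continuity of the spins gives $\langle g,\widehat\gamma_t\rangle\to\langle g,\widehat\gamma_{t_0-}\rangle$ for every $g$, whence $\widehat\gamma_t\to\widehat\gamma_{t_0-}$ in the $\tau$-topology. This establishes the càdlàg property.

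The main obstacle is conceptual rather than computational: one must pass from scalar càdlàg paths to a configuration-valued càdlàg path, which requires producing the genuine left-limit point $\widehat\gamma_{t_0-}\in\Gamma(X,S)$ realizing all evaluation limits at once. Condition \ref{cond-fin} is precisely what prevents an accumulation of jump times, or of incoming and outgoing points over a vanishing window, thereby guaranteeing that $\gamma_{t_0-}$ is locally finite and that the candidate indeed belongs to $\Gamma(X,S)$. The remaining care lies in securing one null set valid for all compact $\Lambda$ and all $t_0$ at once; this is handled by the countable exhaustion $\{\Lambda_k\}$ (every admissible $g$ is supported in, and every compact $\Lambda$ is contained in, some $\Lambda_k$) together with the continuity of the spin coordinates furnished by Theorem \ref{Existence}. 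No quantitative estimate is needed, and the growth Condition \ref{cond-log-1} enters only indirectly, via the existence and continuity of the spin paths.
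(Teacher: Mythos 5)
Your argument is correct and takes essentially the same route as the paper's proof: reduce to the generating functionals of the $\tau$-topology, use Condition \ref{cond-fin} to freeze $\gamma_t\cap\Lambda$ on a one-sided interval around $t_0$, and invoke the continuity of the spin paths supplied by Theorem \ref{Existence}. Your explicit construction of the left-limit configuration $\widehat{\gamma}_{t_0-}$ and the check that it belongs to $\Gamma(X,S)$ merely fill in the step the paper dispatches with ``can be shown similarly.''
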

\begin{proof}
Let us first show that the map $[0,T]\ni t\mapsto\widehat{\gamma}_{t}\in\Gamma(X,S)$
is right-continuous. By the definition of topology on $\Gamma(X,S)$,
the continuity in question is equivalent to the right-continuity of
the map $[0,T]\ni t\mapsto\sum_{x\in\gamma_{t}}F(x,\xi_{x,t})\in\mathbb{R}$
for every function $F\in C_{0,b}(X\times S)$ with support in $B\times S$,
$B\subset X$ compact. Observe that the number of non-zero elements
in the sum above is finite, and the map $[0,T]\ni t\mapsto\xi_{x,t}\in\mathbb{R}$
is continuous for any $x\in\gamma^{T}$. Let us fix arbitrary $t_{0}\in[0,T]$.
There exists $\Delta t$ such that $\gamma_{t}\cup B=\gamma$ for
$t\in[t_{0},t_{0}+\Delta t]\subset\left[0,T\right]$, where $\gamma\in\Gamma(X)$
is independent of $t$. Therefore, the map $[t_{0},t_{0}+\Delta t]\ni t\mapsto\sum_{x\in\gamma_{t}}F(x,\xi_{x,t})=\sum_{x\in\gamma}F(x,\xi_{x,t})\in\mathbb{R}$
is right-continuous.

\smallskip\noindent
The existence of left limits can be shown similarly.
\end{proof}

\section{Dynamics of underlying configurations}\label{Sec DoC}

Below, we consider a class of jump processes satisfying conditions
of Section \ref{sec:jump_proc}.


Throughout this section, we identify a configuration $\gamma\in\Gamma(X)$
with a discrete (counting) measure on $({{\mathbb{R}}^{d}},{\mathcal{B}}({{\mathbb{R}}^{d}}))$
defined by assigning a unit mass to each atom at $x\in\gamma$; in
particular,
\[
\gamma(\Lambda)={\cal N}(\gamma\cap\Lambda)=\sum_{x\in\gamma\cap\Lambda}\gamma(\{x\}),\qquad\Lambda\in{\mathcal{B}}_{\mathrm{b}}({{\mathbb{R}}^{d}}),\ \gamma\in\Gamma(X).
\]
We fix an arbitrary $\varepsilon>0$ and consider the function
\begin{equation}
G(x):=(1+|x|)^{-d-\varepsilon},\quad x\in{{\mathbb{R}}^{d}},\label{defG}
\end{equation}
where $|x|$ denotes the Euclidean norm on ${{\mathbb{R}}^{d}}$.
We denote then
\[
\Gamma_{G}:=\Bigl\{\gamma\in\Gamma(X)\Bigm\vert\langle G,\gamma\rangle:=\sum_{x\in\gamma}G(x)<\infty\Bigr\},
\]
a set of \emph{tempered} configurations. We define a sequential topology
on $\Gamma_{G}$ by assuming that $\gamma_{n}\to\gamma$, $n\to\infty$,
if only
\begin{equation}
\lim_{n\to\infty}\langle f,\gamma_{n}\rangle=\langle f,\gamma\rangle\label{eq:convtop}
\end{equation}
for all $f\in C_{b}({{\mathbb{R}}^{d}})$ (the space of bounded continuous
functions on ${{\mathbb{R}}^{d}}$) such that

\begin{equation}\label{eq:G-class}
|f(x)|\leq M_{f}G(x)\;
\text{for some } M_{f}>0\; \text{and all } x\in{{\mathbb{R}}^{d}}.
\end{equation}
Let ${\mathcal{B}}_{G}(\Gamma(X))$
denote the corresponding Borel $\sigma$-algebra.

\begin{remark}
    Observe that \eqref{eq:G-class} holds for any $f$ with compact support. Thus any $G$-convergent sequence $(\gamma_n)\in\Gamma_G$ converges in the vague topology, which is defined by pairing with compactly supported functions. That is, the $G$-topology is stronger than the vague topology.
\end{remark}

 We will need the following auxiliary result.
\begin{lemma}\label{lem:conv}
 For any $\alpha>0$, $k\in\mathbb{N}$ and $\gamma\in\Gamma_G(X)\cap \mathrm{L}\Gamma(X)$ we have
 \begin{equation}
  C_{\alpha,k,R}(\gamma):=\sum_{x\in\gamma}   e^{-\alpha \vert x\vert}n_{x,R}(\gamma)^k<\infty.
\end{equation}

\end{lemma}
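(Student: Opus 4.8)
The plan is to bound the sum $\sum_{x\in\gamma} e^{-\alpha|x|} n_{x,R}(\gamma)^k$ by decomposing $X = \mathbb{R}^d$ into annular shells and using the logarithmic growth bound $n_{x,R}(\gamma) \le a_R(\gamma)(1 + \log(1+|x|))$ from Definition \ref{def:log} (which holds since $\gamma \in \mathrm{L}\Gamma(X)$). The key observation is that $\gamma \in \Gamma_G$ provides summability control on the number of points, while the logarithmic factor $n_{x,R}^k$ grows only polylogarithmically in $|x|$, and the exponential weight $e^{-\alpha|x|}$ dominates everything.

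Let me sketch the concrete steps. First I would partition $X$ into annuli $A_m := \{x : m-1 \le |x| < m\}$ for $m \in \mathbb{N}$, so that $\gamma = \bigsqcup_m (\gamma \cap A_m)$. On each annulus, for $x \in A_m$ we have $e^{-\alpha|x|} \le e^{-\alpha(m-1)}$ and, by \eqref{eq:log}, $n_{x,R}(\gamma)^k \le a_R(\gamma)^k (1 + \log(1+m))^k =: a_R(\gamma)^k P(m)$ where $P(m)$ grows polylogarithmically. Thus
\begin{equation}
C_{\alpha,k,R}(\gamma) \le a_R(\gamma)^k \sum_{m\in\mathbb{N}} e^{-\alpha(m-1)} P(m)\, N(\gamma \cap A_m).
\end{equation}
The remaining task is to control $N(\gamma \cap A_m)$, the number of points of $\gamma$ in the $m$-th annulus. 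Here I would again invoke \eqref{eq:log}: covering $A_m$ by finitely many balls of radius $R$ centered at points of norm at most $m$ (the number of such balls needed grows polynomially in $m$, like $C_d\, m^{d-1}$), each ball contains at most $a_R(\gamma)(1+\log(1+m))$ points, so $N(\gamma \cap A_m) \le C_d\, m^{d-1} a_R(\gamma)(1+\log(1+m))$. Substituting this back gives a sum of the form $\sum_m e^{-\alpha(m-1)} \cdot (\text{polynomial in } m) \cdot (\text{polylog in } m)$, which converges for every $\alpha > 0$ since the exponential decay beats any polynomial-times-polylog growth.

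The argument is essentially the one sketched in the commented-out proof earlier in the excerpt, and the main (mild) obstacle is bookkeeping: one must verify that the logarithmic estimate \eqref{eq:log} can be applied \emph{uniformly} to bound the local point count on each shell, i.e.\ that a fixed covering of $A_m$ by $R$-balls suffices and that each ball's count is controlled by the same constant $a_R(\gamma)$. A subtlety worth flagging is that the hypothesis $\gamma \in \Gamma_G$ is in fact not needed once the $\mathrm{L}\Gamma(X)$ bound is available in the uniform form \eqref{eq:log}, since that bound alone already yields $N(\gamma \cap A_m) = O(m^{d-1}\log m)$; the role of $\Gamma_G$ would only become essential if one worked with the weaker a priori assumption that $\langle G, \gamma\rangle < \infty$ without the pointwise logarithmic control. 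I would therefore structure the proof around the annular decomposition and the geometric covering bound, leaving the verification that the weight $e^{-\alpha|x|}$ absorbs the factor $m^{d-1}(\log m)^{k+1}$ to a one-line appeal to the convergence of $\sum_m e^{-\alpha(m-1)} m^{d-1}(\log m)^{k+1}$.
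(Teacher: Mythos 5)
Your proof is correct, but it takes a genuinely different route from the paper's. The paper's argument is a one-liner that uses \emph{both} hypotheses: from $\gamma\in\mathrm{L}\Gamma(X)$ it gets $n_{x,R}(\gamma)^k\le a_R(\gamma)^k(1+\log(1+|x|))^k$, observes the pointwise inequality $e^{-\alpha|x|}(1+\log(1+|x|))^k\le G(x)=(1+|x|)^{-d-\varepsilon}$ for $|x|$ large, and then concludes by the very definition of $\Gamma_G$, namely $\sum_{x\in\gamma}G(x)<\infty$ (finitely many remaining points by local finiteness). You instead run the annular decomposition with a covering of each shell $A_m$ by $O(m^{d-1})$ balls of radius $R$, using that the bound \eqref{eq:log} holds for \emph{all} $x\in X$ (not just $x\in\gamma$) to get $N(\gamma\cap A_m)=O\bigl(m^{d-1}\log m\bigr)$, and then let the exponential weight absorb the polynomial-times-polylog growth; this is essentially the commented-out argument earlier in the source. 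What your approach buys is precisely the observation you flag: the hypothesis $\gamma\in\Gamma_G$ becomes redundant, since the uniform logarithmic control already forces $\langle G,\gamma\rangle<\infty$ (indeed $\mathrm{L}\Gamma(X)\subset\Gamma_G$ by your shell estimate), so you prove a marginally stronger statement. What the paper's approach buys is brevity and the avoidance of any geometric bookkeeping, at the price of invoking the a priori temperedness of $\gamma$. Both arguments are sound; yours just relocates the summability burden from the $\Gamma_G$ assumption to the covering estimate.
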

\begin{proof}
The result immediately follows from the inequality
\begin{equation}
 e^{-\alpha \vert x\vert}(1+\log(1+  \vert x\vert))^k\le G(x),
\end{equation}
which holds for sufficiently large $\vert x\vert$.

\end{proof}

Let $b:{{\mathbb{R}}^{d}}\times\Gamma_{G}\to{\mathbb{R}}_{+}:=[0,\infty)$
be a measurable function; and let $m>0$ be a constant. We describe
a spatial birth-and-death process $\gamma:{\mathbb{R}}_{+}\to\Gamma_{G}$
with the constant death rate $m$ and the birth rate $b$ through
the following three properties:
\begin{enumerate}
\item If the system is in a state $\gamma_{t}\in\Gamma_{G}$ at the time
$t\in{\mathbb{R}}_{+}$, then the probability that a new particle
appears (a ``birth'' happens) in a $\Lambda\in{\mathcal{B}}_{\mathrm{b}}({{\mathbb{R}}^{d}})$
during a time interval $[t;t+\Delta t]$ is
\begin{equation}
\Delta t\int\limits_{\Lambda}b(x,\gamma_{t})dx+o(\Delta t).\label{meaning_of_birthrate}
\end{equation}
\item If the system is in a state $\gamma_{t}\in\Gamma_{G}$ at the time
$t\in{\mathbb{R}}_{+}$, then, for each $x\in\gamma_{t}$, the probability
that the particle at $x$ dies during a time interval $[t;t+\Delta t]$
is $m\cdot\Delta t+o(\Delta t)$.
\item With probability $1$ no two events described above happen simultaneously.
\end{enumerate}
The (heuristic) generator of our process is
\begin{equation}
LF(\gamma)=m\sum\limits_{x\in\gamma}\bigl(F(\gamma\setminus\{x\})-F(\gamma)\bigr)+\int\limits_{{\mathbb{R}}^{d}}b(x,\gamma)\bigl(F(\gamma\cup\{x\})-F(\gamma)\bigr)dx.\label{the generator}
\end{equation}

\begin{definition} \label{def:sol} Let a right-continuous complete
filtration $\{\mathcal{F}_{t}\}$ be fixed.
\begin{enumerate}
\item Let $N$ be the Poisson point process on ${\mathbb{R}}_{+}\times{\mathbb{R}}^{d}\times{\mathbb{R}}_{+}^{2}$
with mean measure $ds\times dx\times du\times e^{-r}dr$, compatible
w.r.t. the filtration $\{\mathcal{F}_{t}\}$. The latter means that
for any measurable $A\subset{\mathbb{R}}^{d}\times{\mathbb{R}}_{+}^{2}$,
$N([0,t],A)$ is $\mathcal{F}_{t}$-measurable and $N((t,s],A)$ is
independent of $\mathcal{F}_{t}$ for $0<t<s$.
\item Let $\gamma_{0}$ be a $\Gamma_{G}$-valued $\mathcal{F}_{0}$-measurable
random variable independent on $N$. Consider a point process $\tilde{\gamma}_{0}$
on ${\mathbb{R}}^{d}\times{\mathbb{R}}_{+}$ obtained by attaching
to each point of $\gamma_{0}$ an independent unit exponential random
variable. Namely, if $\gamma_{0}=\{x_{i}:i\in{\mathbb{N}}\}$ then
$\tilde{\gamma}_{0}=\{(x_{i},s_{i}):i\in{\mathbb{N}}\}$ and $\{s_{i}\}$
are independent unit exponentials, independent of $\gamma_{0}$ and
$N$.
\item We will say that a process $(\gamma_{t})_{t\geq0}$ with sample paths
in the Skorokhod space $D_{\Gamma_{G}}[0,\infty)$ has the constant
death rate $m$ and the birth rate $b$ if it is adapted to a filtration
$\{\mathcal{F}_{t}\}$ w.r.t. to which $N$ is compatible and if,
for any $\Lambda\in{\mathcal{B}}_{\mathrm{b}}({{\mathbb{R}}^{d}})$,
the following equality holds almost surely
\begin{equation}
\begin{aligned}\gamma_{t}(\Lambda) & =\int\limits_{(0,t]\times\Lambda\times\R_{+}^{2}}1\!\!1_{[0,b(x,\gamma_{s-})]}(u)1\!\!1_{(m(t-s),\infty)}(r)N(ds,dx,du,dr)\\
 & \quad+\int\limits_{\Lambda\times\R_{+}}I_{(m\,t,\infty)}(r)\tilde{\gamma}_{0}(dx,dr).
\end{aligned}
\label{seGK}
\end{equation}

\end{enumerate}
\end{definition}

\begin{remark}
\begin{enumerate}
\item The mapping ${\mathbb{R}}_{+}\times\Omega\ni(t,\omega)\mapsto\gamma_{t}(\omega)\in\Gamma$
defines a jump process, or, equivalently, $\gamma_{\cdot}(\omega)$
is a jump-function for a.a. $\omega\in\Omega$. Therefore, $\gamma_{s-}$
is well defined in \eqref{seGK} for each $s>0$, and also all moments
of random events (both of births and deaths) form a countable subset
of ${\mathbb{R}}_{+}$.
\item The integrals in \eqref{seGK} are sums. Indeed, $N$ is a (counting)
random measure on the space ${\mathbb{R}}_{+}\times{{\mathbb{R}}^{d}}\times{\mathbb{R}}_{+}^{2}$,
i.e. a random configuration $N=N(\omega)=\{(s,x,u,r)\}$, where $\omega\in\Omega$
(the underlying probability space). Then, for a.a. $\omega\in\Omega$,
\begin{equation}
\begin{aligned}\gamma_{t}(\Lambda) & =|\gamma_{t}(\omega)\cap\Lambda|=\\
 & =\sum_{(s,x,u,r)\in N(\omega)}1\!\!1_{0<s\leq t}(\omega)1\!\!1_{x\in\Lambda}1\!\!1_{u\leq b(x,\gamma_{s-})}(\omega)1\!\!1_{r>m(t-s)}(\omega)\\
 & \quad+\sum_{(x,r)\in\tilde{\gamma}_{0}(\omega)}1\!\!1_{x\in\Lambda}(\omega)1\!\!1_{r>mt}(\omega)
\end{aligned}
\label{eq:rewrite}
\end{equation}

\item
Let us use the notation $\gamma_{(b,m);t}$ for the process $\gamma_{t}$
with birth and death rates $b$ and $m$, respectively. Assume that
$\boldsymbol{b}:=\sup_{\gamma\in\Gamma(X),x\in\gamma}b(x,\gamma)<\infty$.
Then, for any $t\in[0,T]$, we have the following chain of inequalities:
\[
\gamma_{(b,m);t}(\Lambda)\le\gamma_{(\boldsymbol{b},m);t}(\Lambda)\le\gamma_{(\boldsymbol{b},0);t}(\Lambda).
\]
Let us remark that these inequalities hold because $\boldsymbol{b}$and
$m$ are independent of $\gamma$. Thus we also have
\[
\gamma_{(b,m)}^{t}(\Lambda)\le\gamma_{(\boldsymbol{b},m)}^{t}(\Lambda)\le\gamma_{(\boldsymbol{b},0)}^{t}(\Lambda).
\]
Observe now that for a process $\gamma_{t}$ with zero mortality its
phantom does not contain any additional points, so that
\[
\gamma_{(\boldsymbol{b},0)}^{t}(\Lambda)=\gamma_{(\boldsymbol{b},0);t}(\Lambda).
\]
It follows then from ($\text{\ref{eq:rewrite})}$ that, for any $t\in[0,T]$,
\begin{equation}\label{eq:rewrite1}
\begin{aligned}
    \gamma_{(b,m)}^{t}(\Lambda)\le\gamma_{(\boldsymbol{b},0);t}(\Lambda)=& N\left([0,t]\times\Lambda\times[0,{\bf b}]\times[0,\infty)\right)+{\gamma}_{0}(\Lambda)\\&
    ={\tilde{N}}\left([0,t]\times\Lambda\times[0,{\bf b}]\right)+{\gamma}_{0}(\Lambda),
\end{aligned}
\end{equation}
where $\tilde{N}$ is the Poisson point process on ${\mathbb{R}}_{+}\times{\mathbb{R}}^{d}\times{\mathbb{R}}_{+}$
with intesity measure $ds\times dx\times du$, because $e^{-r}dr$ is a probability measure on $\mathbb{R}_+$, cf. Def. \ref{def:sol}.1.

\end{enumerate}
\end{remark}

\begin{lemma}
 Assume that $\gamma_0\in {\mathrm L}\Gamma(X)$. Then $\gamma^t\in{\mathrm L}\Gamma(X)$ for any $t\in[0,T].$
\end{lemma}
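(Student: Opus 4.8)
The plan is to control the phantom configuration through the stochastic domination already established in the preceding remark, equation \eqref{eq:rewrite1}, which is available under the standing assumption $\boldsymbol{b}:=\sup_{\gamma\in\Gamma(X),\,x\in\gamma}b(x,\gamma)<\infty$. Applying \eqref{eq:rewrite1} with $\Lambda=B_{x,R}$ gives, for every $x\in X$, every $R>0$ and every $t\in[0,T]$,
\[
n_{x,R}(\gamma^t)=\gamma^t(B_{x,R})\le \tilde{N}\bigl([0,t]\times B_{x,R}\times[0,\boldsymbol{b}]\bigr)+n_{x,R}(\gamma_0),
\]
almost surely. Since $\gamma_0\in\mathrm{L}\Gamma(X)$ by hypothesis, the second term is bounded by $a_R(\gamma_0)\bigl(1+\log(1+|x|)\bigr)$, so the whole difficulty is concentrated in the first term, and the task reduces to producing, for each $R$, an a.s.\ finite random constant that bounds it by $1+\log(1+|x|)$ uniformly in $x$.

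The key observation I would use is that the dominating term is itself a \emph{homogeneous Poisson configuration} on $X$. Indeed, restricting $\tilde{N}$ to $[0,t]\times X\times[0,\boldsymbol{b}]$ and pushing forward under the projection $(s,x,u)\mapsto x$, the Poisson mapping theorem shows that $P_t(\cdot):=\tilde{N}\bigl([0,t]\times\cdot\times[0,\boldsymbol{b}]\bigr)$ is a Poisson point process on $X$ with intensity measure $t\boldsymbol{b}\,dx$. Hence $n_{x,R}(\gamma^t)\le n_{x,R}(P_t)+n_{x,R}(\gamma_0)$, and the problem is reduced to showing that $P_t\in\mathrm{L}\Gamma(X)$ almost surely. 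This last statement is precisely the content of Remark \ref{rem:Poisson}: a typical configuration drawn from a (homogeneous) Poisson measure satisfies \eqref{eq:log}, with $n_{x,R}$ of order $\sqrt{1+\log(1+|x|)}$, so a fortiori it lies in $\mathrm{L}\Gamma(X)$. Combining this with the bound for $\gamma_0$ and the additivity of the counting bound, one may take $a_R(\gamma^t):=a_R(P_t)+a_R(\gamma_0)$, which yields $\gamma^t\in\mathrm{L}\Gamma(X)$; in particular local finiteness of the phantom comes for free.

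For completeness I would also indicate how to prove the Poisson bound directly, should a self-contained argument be preferred over invoking Remark \ref{rem:Poisson}. The route is Borel--Cantelli: first reduce from all $x\in X$ to the lattice $\mathbb{Z}^d$ via $B_{x,R}\subset B_{\bar{x},\,R+\sqrt{d}/2}$ for the nearest lattice point $\bar{x}$, noting that $1+\log(1+|x|)$ and $1+\log(1+|\bar{x}|)$ are comparable; for fixed $R$ each $Y_{\bar{x}}:=n_{\bar{x},\,R+\sqrt{d}/2}(P_t)$ is Poisson with an $x$-independent mean $\lambda$, and the tail bound $\mathbb{P}(Y_{\bar{x}}\ge k)\le e^{-\lambda}(e\lambda/k)^{k}$ with threshold $k\asymp\log(1+|\bar{x}|)$ decays like $(1+|\bar{x}|)^{-c\log\log(1+|\bar{x}|)}$, which is summable against the polynomial lattice-point count; thus a.s.\ only finitely many $\bar{x}$ exceed the threshold, and those finitely many (a.s.\ finite) values are absorbed into $a_R$, while all $R$ are handled simultaneously by monotonicity of $n_{x,R}$ in $R$ and restriction to $R\in\mathbb{N}$. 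The only genuinely probabilistic point is thus the a.s.\ logarithmic bound for $P_t$; everything else (the domination \eqref{eq:rewrite1}, additivity, and the deterministic lattice reduction) is bookkeeping. The delicate feature of the tail estimate worth flagging is that it is the $\log\log$ factor in the Poisson tail, rather than the size of any prefactor, that secures summability over the lattice.
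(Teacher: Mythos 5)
Your proposal is correct and follows essentially the same route as the paper's own (very terse) proof: both reduce the phantom count via the domination \eqref{eq:rewrite1} to a count under the driving Poisson process plus $n_{x,R}(\gamma_0)$, and then invoke the a.s.\ logarithmic growth of typical Poisson configurations as in Remark \ref{rem:Poisson}. Your version is in fact more careful than the paper's — the explicit projection of $\tilde N$ to a homogeneous Poisson process on $X$ and the self-contained Borel--Cantelli argument fill in details the paper leaves implicit.
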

\begin{proof}
   By \eqref{eq:rewrite1}, it is sufficient to estimate
   $N([0,t]\times B_{x,R}\times [0,\textbf{b}])$. Observe that, for $R$ big enough,
   $B_{x,R}\subset \tilde{B}_{(0,x,0),R}$, the ball in ${\mathbb{R}}_{+}\times{\mathbb{R}}^{d}\times{\mathbb{R}}_{+}$ centerd at $(0,x,0)$. The result follows now from the properties of Poisson point processes, see Remark \ref{rem:Poisson}.
\end{proof}

\begin{proposition} \label{prop:jumpevent} Let process $(\gamma_{t})_{t\geq0}$
satisfy conditions of Definition~\ref{def:sol}, in particular, let
\eqref{seGK} hold. Then
\begin{enumerate}
\item For each $x\in\gamma_{0}$,
\[
\gamma_{t}(\{x\})-\gamma_{t-}(\{x\})=\begin{cases}
-1, & \text{if }(x,mt)\in\tilde{\gamma_{0}},\\
0, & \text{otherwise},
\end{cases}
\]
that corresponds to the death of an initially present at $x$ point
after exponential life-time $t$ (with the parameter $m$);
\item For a.a. $x\notin\gamma_{0}$,
\begin{equation}
\begin{aligned}\gamma_{t}(\{x\})-\gamma_{t-}(\{x\}) & =\sum_{(t,x,u,r)\in N}1\!\!1_{0\leq u\leq b(x,\gamma_{t-})}\\
 & \quad-\sum_{(s,x,u,r)\in N}1\!\!1_{0<s<t}1\!\!1_{0\leq u\leq b(x,\gamma_{s-})}1\!\!1_{r=m(t-s)}.
\end{aligned}
\label{eq:diff}
\end{equation}
The first sum is $1$ iff a birth at $x$ happened at time $t$. A
summand in the second sum is $1$ iff a point born at $x$ before
time $t$ and died at time $t$ (with the life-time $t-s$).
\end{enumerate}
\end{proposition}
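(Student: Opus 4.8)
The plan is to read both formulas directly off the representation \eqref{eq:rewrite} of $\gamma_t(\{x\})$, specialised to the singleton $\Lambda=\{x\}$, and then to compute the left limit $\gamma_{t-}(\{x\})$ by a careful passage $v\uparrow t$ in the indicator functions. The preliminary observation I would record first is that, since $\gamma_0$ is independent of $N$ and $N$ has intensity $ds\,dx\,du\,e^{-r}dr$ whose $x$-marginal is Lebesgue measure (hence diffuse), almost surely no atom of $N$ shares its spatial coordinate with a point of $\gamma_0$, and, moreover, distinct atoms of $N$ have pairwise distinct spatial coordinates; the latter holds in every bounded window, where $N$ has finitely many points whose spatial positions are i.i.d. with a diffuse law, and then globally by a countable union. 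Likewise $r>0$ for every atom almost surely. These are the almost-sure events on which the whole argument runs, and discarding them is precisely what the qualifier ``for a.a. $x\notin\gamma_0$'' encodes.

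For part (1), fix $x\in\gamma_0$. By the above, the $N$-sum in \eqref{eq:rewrite} contributes nothing at the spatial coordinate $x$, so only the $\tilde{\gamma}_0$-term survives and $\gamma_t(\{x\})=\1_{s_x>mt}$, where $s_x$ is the unit exponential attached to $x$. As a function of $t$ this equals $1$ on $[0,s_x/m)$ and $0$ on $[s_x/m,\infty)$; it is right-continuous, with a single downward unit jump at $t=s_x/m$, i.e.\ exactly when $mt=s_x$, equivalently $(x,mt)\in\tilde{\gamma}_0$. Subtracting the value from its left limit yields the claimed dichotomy, the $-1$ corresponding to the death of the initial point after its exponential lifetime $s_x/m$.

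For part (2), fix $x\notin\gamma_0$. The $\tilde{\gamma}_0$-term now drops out, and on the almost-sure event above there is at most one atom $(s_0,x,u_0,r_0)$ of $N$ at the spatial coordinate $x$, so the $N$-sum in \eqref{eq:rewrite} reduces to the single term $\1_{u_0\le b(x,\gamma_{s_0-})}\,\1_{s_0\le t}\,\1_{r_0>m(t-s_0)}$; in particular no interchange of limit and sum is needed. Writing this value as $\1_{u_0\le b(x,\gamma_{s_0-})}\,\1_{s_0\le t<s_0+r_0/m}$ and letting $v\uparrow t$, the two boundary indicators pass to $\1_{s_0<t}$ and $\1_{t\le s_0+r_0/m}$ respectively, so that $\gamma_{t-}(\{x\})=\1_{u_0\le b(x,\gamma_{s_0-})}\,\1_{s_0<t\le s_0+r_0/m}$. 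Subtracting, the difference is $+1$ exactly when $t=s_0$ (an accepted birth at $x$, the condition reading $u_0\le b(x,\gamma_{t-})$) and $-1$ exactly when $t=s_0+r_0/m$, i.e.\ $r_0=m(t-s_0)$ with $s_0<t$ (the death of that point); these are precisely the two sums on the right-hand side of \eqref{eq:diff}, and because there is only one atom at $x$ they are mutually exclusive.

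The routine boundary bookkeeping in part (2) --- distinguishing $\le$ from $<$ and $>$ from $\ge$ when passing to the left limit --- is the only place demanding care in the computation itself; the genuinely delicate point is purely measure-theoretic, namely the almost-sure separation of spatial coordinates (of $N$ from $\gamma_0$ and of the atoms of $N$ from one another) together with the positivity of the lifetimes, which simultaneously collapses every sum to a single term and legitimises the ``a.a.\ $x$'' quantifier. Once those null events are set aside, everything reduces to the elementary analysis of a single indicator of the form $\1_{[s_0,\,s_0+r_0/m)}(t)$.
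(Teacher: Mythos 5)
Your proposal is correct and follows essentially the same route as the paper: read $\gamma_t(\{x\})$ off the pointwise form \eqref{eq:rewrite} of \eqref{seGK}, obtain $\gamma_{t-}(\{x\})$ by turning the boundary inequalities in the indicators from strict to non-strict (and vice versa), and then discard the null events where some atom of $N$ has $r=0$ or shares its spatial coordinate with $\gamma_0$. The only cosmetic difference is that you additionally invoke the a.s.\ pairwise distinctness of the spatial coordinates of the atoms of $N$ to collapse each sum to a single term, whereas the paper keeps the sums exactly as they appear in \eqref{eq:diff}; both versions rest on the same measure-theoretic observations and yield the same conclusion.
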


\begin{proof} Fix an a.a. $\omega\in\Omega$. For each $t>0$ and
$x\in{{\mathbb{R}}^{d}}$, one has, by \eqref{seGK},
\begin{align*}
\gamma_{t}(\{x\}) & =\sum_{(s,x,u,r)\in N}1\!\!1_{0<s\leq t}1\!\!1_{0\leq u\leq b(x,\gamma_{s-})}1\!\!1_{r>m(t-s)}+\sum_{(x,r)\in\tilde{\gamma}_{0}(\omega)}1\!\!1_{r>mt};\\
\gamma_{t-}(\{x\}) & =\lim_{\varepsilon\to0+}\sum_{(s,x,u,r)\in N}1\!\!1_{0<s\leq t-\varepsilon}1\!\!1_{0\leq u\leq b(x,\gamma_{s-})}1\!\!1_{r>m(t-\varepsilon-s)}\\
&+\lim_{\varepsilon\to0+}\sum_{(x,r)\in\tilde{\gamma}_{0}}1\!\!1_{r>m(t-\varepsilon)}=\\
 & =\sum_{(s,x,u,r)\in N}1\!\!1_{0<s<t}1\!\!1_{u\leq b(x,\gamma_{s-})}1\!\!1_{r\geq m(t-s)}+\sum_{(x,r)\in\tilde{\gamma}_{0}}1\!\!1_{r\geq mt}.
\end{align*}
Therefore,
\begin{align*}
\gamma_{t}(\{x\})-\gamma_{t-}(\{x\}) & =-\sum_{(s,x,u,r)\in N}1\!\!1_{0<s<t}1\!\!1_{0\leq u\leq b(x,\gamma_{s-})}1\!\!1_{r=m(t-s)}\\
 & \quad+\sum_{(s,x,u,r)\in N}1\!\!1_{s=t}1\!\!1_{0\leq u\leq b(x,\gamma_{s-})}1\!\!1_{r>m(t-s)}-\sum_{(x,r)\in\tilde{\gamma}_{0}}1\!\!1_{r=mt}\\
 & =-\sum_{(s,x,u,r)\in N}1\!\!1_{0<s<t}1\!\!1_{0\leq u\leq b(x,\gamma_{s-})}1\!\!1_{r=m(t-s)}\\
 & \quad+\sum_{(t,x,u,r)\in N}1\!\!1_{0\leq u\leq b(x,\gamma_{t-})}1\!\!1_{r>0}-1\!\!1_{(x,mt)\in\tilde{\gamma}_{0}}.
\end{align*}
Since for a.a. $\omega$, $N(\omega)$ does not contain points $(s,x,u,r)$
with fixed $r=0$, we may assume that the chosen $\omega$ is such
that $r>0$ for all $(s,x,u,r)\in N$. (Note that $r=m(t-s)$ can't
be excluded as $s$ depends on $\omega$.) Therefore,
\begin{align*}
\gamma_{t}(\{x\})-\gamma_{t-}(\{x\}) & =-\sum_{(s,x,u,r)\in N}1\!\!1_{0<s<t}1\!\!1_{0\leq u\leq b(x,\gamma_{s-})}1\!\!1_{r=m(t-s)}\\
 & \quad+\sum_{(t,x,u,r)\in N}1\!\!1_{0\leq u\leq b(x,\gamma_{t-})}-1\!\!1_{(x,mt)\in\tilde{\gamma}_{0}}.
\end{align*}
Similarly, since $\gamma_{0}$ is a fixed countable set, then, for
a.a. $\omega$, $N$ does not contain points $(s,x,u,r)$ such that
$x\in\gamma_{0}$. Recall also that $\tilde{\gamma}_{0}$ is independent
on $N$. As a result, one gets the statement. \end{proof}

\begin{remark} As a result, for a.a. $\omega\in\Omega$ and for all
$t\geq0$,
\begin{align*}
\gamma_{t}\setminus\gamma_{0}=\gamma_{t}(\omega)\setminus\gamma_{0}=\Bigl\{ x\in{{\mathbb{R}}^{d}}\bigm\vert & \exists(s,u,r)\in{\mathbb{R}}_{+}^{3}:\\
 & s\in(0,t],u\in\lbrack0,b(x,\gamma_{s-})],r>m(t-s),\\
 & (s,x,u,r)\in N(\omega)\Bigr\}.
\end{align*}
In other words, $N$, being a driving process (that is a random set,
without any time component), contains the whole `history' for $\gamma$.
The variable $s$ represents times of births in $\gamma_{t}$. Times
of deaths are not `visible' (but may be studied though), we require
instead that a point at $x$ born at time $s\in(0,t]$ does not die
within the time interval $[s,t]$ to be present at time $t$. \end{remark}

\begin{theorem}\cite[Theorem 2.1]{BDFK+2021} \label{thm_exist_uniq}
Suppose that
\begin{equation}
\mathbf{b}:=\sup_{\substack{x\in{{\mathbb{R}}^{d}}\\
\gamma\in\Gamma_{G}
}
}b(x,\gamma)<\infty,\label{b_is_bdd}
\end{equation}
and, for some $M>0$,
\begin{equation}
\sup_{\gamma\in\Gamma_{G}}\bigl\lvert b(x,\gamma\cup\{y\})-b(x,\gamma)\bigr\rvert\leq MG(x-y),\quad x,y\in{{\mathbb{R}}^{d}}.\label{main_cond_GK}
\end{equation}
Then there exists a pathwise unique solution to \eqref{seGK} in the
sense of Definition~\ref{def:sol}.
\end{theorem}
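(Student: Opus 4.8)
The plan is to construct a solution of \eqref{seGK} directly on the driving Poisson process $N$ by a thinning (graphical) procedure, and then to establish pathwise uniqueness by a Gronwall-type estimate that exploits the decay built into \eqref{main_cond_GK}. The two standing assumptions play complementary roles: the uniform bound \eqref{b_is_bdd} provides the domination by a pure-birth Poisson field recorded in \eqref{eq:rewrite1}, which guarantees that in any bounded region over a bounded time horizon only finitely many candidate births occur, while the spatial Lipschitz bound \eqref{main_cond_GK} controls the dependence of the birth rate at $x$ on far-away points and will be the source of all quantitative estimates.

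For existence, I would realize each candidate birth point $(s,x,u,r)\in N$ as accepted precisely when $u\le b(x,\gamma_{s-})$. The difficulty is that this acceptance rule is implicit: whether a point is born depends on $\gamma_{s-}$, which in turn is determined by earlier accepted births lying within the (soft) range encoded by $G$. I would therefore organize the candidate points by their genealogy: a candidate at $(s,x)$ may be influenced only by accepted points present at time $s-$, each of which was itself a candidate at an earlier time whose acceptance depended on still earlier points. Using the domination by the pure-birth field of intensity $\mathbf{b}$ together with the integrability of $G$, one shows that, for a.a.\ realizations of $N$, every candidate point has an a.s.\ finite ancestry (the influence graph does not percolate to infinity). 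This finiteness makes the recursive acceptance rule well defined point by point, and setting $\gamma_t$ to be the set of accepted and not-yet-dead points yields a $\Gamma_{G}$-valued càdlàg process satisfying \eqref{seGK}; membership in $\Gamma_{G}$ and the Skorokhod-path property follow again from \eqref{eq:rewrite1}.

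For uniqueness, suppose $\gamma_t$ and $\gamma_t'$ both solve \eqref{seGK} driven by the same $N$ and $\tilde\gamma_0$. Since births and deaths are triggered by the same atoms of $N$, the two processes can first differ only through a discrepancy in an acceptance decision, i.e.\ at a point $(s,x,u,r)$ with $u$ lying between $b(x,\gamma_{s-})$ and $b(x,\gamma_{s-}')$. I would bound the expected number of such discrepancies in a weighted norm built from $G$: the expected disagreement mass in a region controls, via \eqref{main_cond_GK}, the rate at which new disagreements are created, and the decay of $G$ makes the resulting spatial coupling summable. This produces a closed Gronwall-type inequality for the $G$-weighted expected symmetric difference $\mathbb{E}\langle G,\,\gamma_t\,\triangle\,\gamma_t'\rangle$, with a finite constant proportional to $M$ and $\mathbf{b}$; since the initial difference vanishes, Gronwall forces the symmetric difference to stay zero for all $t\in[0,T]$, giving pathwise uniqueness.

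The main obstacle is the long-range, configuration-dependent nature of the birth rate: $b(x,\gamma_{s-})$ depends on the entire, generically infinite, configuration. Both halves of the argument hinge on converting this into a convergent quantitative estimate through $G$ --- in existence, into the a.s.\ finiteness of the ancestry graph (a subcriticality / non-percolation statement), and in uniqueness, into finiteness of the Gronwall constant in the $G$-weighted norm. Establishing the non-percolation of the influence graph, and choosing the weight so that the self-consistent estimate closes, are the delicate points; the local finiteness supplied by \eqref{eq:rewrite1} is what ultimately keeps all the relevant sums and expectations finite.
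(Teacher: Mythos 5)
This theorem is not proved in the present paper at all: it is imported verbatim from \cite[Theorem 2.1]{BDFK+2021}, so there is no in-paper proof to compare your proposal against. Your sketch should therefore be judged against the Garcia--Kurtz-type argument used in that reference, which it resembles in outline: domination by the pure-birth Poisson field (as in \eqref{eq:rewrite1}), a constructive realisation of the acceptance rule on the driving process $N$, and a Gronwall estimate in a $G$-weighted norm for uniqueness. Your uniqueness paragraph is essentially the right argument; the key technical inputs you leave implicit are the a priori finiteness of $\mathbb{E}\langle G,\gamma_t\rangle$ (from the domination) and a subconvolutivity estimate of the form $\int G(x-y)G(x)\,dx\leq C\,G(y)$, which is what makes the $G$-weighted discrepancy inequality close.

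The genuine gap is in your existence argument. Under \eqref{main_cond_GK} the birth rate $b(x,\gamma)$ depends on the \emph{entire} configuration with an influence that merely decays like $G(x-y)$; it is not of finite range. Consequently the ``influence graph'' you propose is essentially complete: every accepted point at any earlier time perturbs the acceptance threshold of every later candidate, however slightly, so the ancestry of a candidate point is not a.s.\ finite and the non-percolation/finite-genealogy recursion does not terminate. That graphical argument is valid only for finite-range rates. For rates satisfying \eqref{b_is_bdd}--\eqref{main_cond_GK} the construction must instead proceed by a Picard-type iteration: one defines $\gamma^{(n+1)}$ as the (explicit, dominated) thinning of $N$ driven by the rate $b(x,\gamma^{(n)}_{s-})$, and shows that the iterates converge in the same $G$-weighted expected-symmetric-difference metric you use for uniqueness, again via the convolution bound on $G$. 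In other words, the correct existence proof is the quantitative, weighted-norm argument you reserve for uniqueness, not the combinatorial one you propose; as written, the existence half would fail at the step ``every candidate point has an a.s.\ finite ancestry.''
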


\begin{remark}\label{rem:markov}
Process $\gamma_t,\,t\in [0,T],$ is the unique solution of the martingale problem with generator \eqref{the generator}, which implies that it is Markov, see Th. 4.4.2 in \cite{EtKu}. Although the state space $\Gamma(X)$ is not separable, it is known to be standard Borel, as a subspace of all Radon measures on $X$, which is sufficient for application of the aforementioned theorem.  \end{remark}

In \cite{BDFK+2021}, conditions \eqref{b_is_bdd}--\eqref{main_cond_GK}
were checked for the following examples.

\begin{example}[{\negthinspace{}\negthinspace{}{\protect\cite[Example 4.1]{BDFK+2021}}}]
\label{ex:Glauber} We consider
\begin{equation}
b(x,\gamma)=z\exp\biggl(-\sum\limits_{y\in\gamma\setminus\{x\}}\phi(x-y)\biggr),
\end{equation}
where $z>0$ and $\phi:{\mathbb{R}}^{d}\rightarrow{\mathbb{R}}_{+}$
is such that $\phi(x)\leq BG(x)$, $x\in{\mathbb{R}}^{d}$ for some
$B>0$. Then the mapping \eqref{the generator} is the generator of
the so-called Glauber dynamics in continuum which was actively studied
in recent decades, see e.g. \cite{FKKoz2011,KL2005,FKK2011c} and
references therein. \end{example}

\begin{example}[{\negthinspace{}\negthinspace{}{\protect\cite[Theorem 3.1]{BDFK+2021}}}]
\label{ex:fecundity} We consider
\begin{equation}
b(x,\gamma)=\sum\limits_{y\in\gamma}a(x-y)\biggl(1+\sum\limits_{z\in\gamma\setminus\{y\}}c(z-y)\biggr)\exp\biggl(-\sum\limits_{z\in\gamma\setminus\{y\}}\varphi(z-y)\biggr),\label{eq:fecundity}
\end{equation}
where $a,c,\varphi:{\mathbb{R}}^{d}\to{\mathbb{R}}_{+}$ are such
that:
\begin{equation}
a(x)\leq BG^{2}(x),\qquad\varphi(x)\leq BG(x),\qquad c(x)\leq p\varphi(x)
\end{equation}
for some $B\geq1$, $p\geq0$ and a.a. $x\in{\mathbb{R}}^{d}$. This
is the model with a density dependent \emph{fecundity} rate, see \nocite{BDFK+2021,FKK2011c}
for details. \end{example}

\begin{example}[{\negthinspace{}\negthinspace{}{\protect\cite[Example~4.2]{BDFK+2021}}}]
We consider
\begin{equation}
b(x,\gamma)=\biggl(\sum\limits_{y\in\gamma}a(x-y)\biggr)\biggl(1+\sum\limits_{z\in\gamma}c(x-z)\biggr)\exp\biggl(-\sum\limits_{z\in\gamma}\phi(x-z)\biggr),\label{eq:establishment}
\end{equation}
where $a,c,\varphi:{\mathbb{R}}^{d}\rightarrow{\mathbb{R}}_{+}$ are
such that
\begin{equation}
a(x)\leq q\phi(x),\qquad c(x)\leq p\phi(x),\qquad\phi(x)\leq BG(x),\label{estkernest}
\end{equation}
for some $q,B>0$, $p\geq0$ and a.a. $x\in{\mathbb{R}}^{d}$. This
is the model with a density dependent \emph{establishment} rate, see
again \nocite{BDFK+2021,FKK2011c} for details. \end{example}

\begin{proposition} \label{prop:union_is_configuration} Let the
conditions of Theorem~\ref{thm_exist_uniq} hold. Then the process $\gamma_t,\,t\in [0,T]$, satisfies Condition \ref{cond-fin}.
\end{proposition}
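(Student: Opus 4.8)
The plan is to note that for a fixed compact $\Lambda$ the only events that alter the restriction $\gamma_t\cap\Lambda$ are births at points of $\Lambda$ and deaths of points lying in $\Lambda$; births and deaths at points outside $\Lambda$ leave $\gamma_t\cap\Lambda$ unchanged by the very definition of the restriction. Hence it suffices to bound, almost surely, the total number of such localized events on $[0,T]$.

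First I would bound the number $B_\Lambda$ of births in $\Lambda$. Every birth in $\Lambda$ up to time $T$ arises from a point $(s,x,u,r)\in N$ with $s\in(0,T]$, $x\in\Lambda$ and $u\le b(x,\gamma_{s-})\le\mathbf{b}$, the lifetime coordinate $r$ being irrelevant for counting births; compare the explicit representation \eqref{eq:rewrite} and the domination \eqref{eq:rewrite1}. Thus $B_\Lambda\le N\bigl((0,T]\times\Lambda\times[0,\mathbf{b}]\times\R_+\bigr)$, and since $N$ is Poisson with mean measure $ds\,dx\,du\,e^{-r}dr$ and $\int_0^\infty e^{-r}\,dr=1$, the right-hand side has mean $T\cdot\mathrm{Leb}(\Lambda)\cdot\mathbf{b}$, which is finite because $\Lambda$ is bounded and $\mathbf{b}<\infty$ by \eqref{b_is_bdd}. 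Consequently $B_\Lambda<\infty$ almost surely.

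Next I would bound the number $D_\Lambda$ of deaths in $\Lambda$. Any point that dies in $\Lambda$ was either present initially, i.e. belongs to $\gamma_0\cap\Lambda$, or was born in $\Lambda$ during $[0,T]$; by Proposition \ref{prop:jumpevent} each such point dies at most once. Therefore $D_\Lambda\le|\gamma_0\cap\Lambda|+B_\Lambda$, and since $\gamma_0\in\Gamma_G\subset\Gamma(X)$ is locally finite we have $|\gamma_0\cap\Lambda|<\infty$; combined with the previous step this gives $D_\Lambda<\infty$ almost surely.

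Finally, since almost surely no two events occur at the same instant (Property (3) of the description, reflected in the Poisson structure of $N$), the number of value changes of $t\mapsto\gamma_t\cap\Lambda$ equals $B_\Lambda+D_\Lambda\le|\gamma_0\cap\Lambda|+2B_\Lambda<\infty$ almost surely, which is precisely Condition \ref{cond-fin}. I do not anticipate a genuine analytic obstacle here: all estimates reduce to finiteness of a Poisson mean over a bounded region. The only point requiring a little care is the bookkeeping that localized events are exactly the ones changing $\gamma_t\cap\Lambda$ and that each physical point is born and dies at most once, so that no double counting inflates the bound.
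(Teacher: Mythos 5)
Your proposal is correct and follows essentially the same route as the paper: bound the number of births in $\Lambda$ up to time $T$ by the a.s.\ finite number of points of $N$ in $(0,T]\times\Lambda\times[0,\mathbf{b}]\times\mathbb{R}_{+}$, then bound the number of deaths by the number of births plus $\gamma_{0}(\Lambda)$, and conclude that $t\mapsto\gamma_{t}\cap\Lambda$ changes value finitely often. The only cosmetic difference is that the paper packages the first step via the dominating pure-birth Poisson process $\Pi_{t}$ (with $m=0$, $b\equiv\mathbf{b}$) and the inclusion $\gamma_{s}\setminus\gamma_{0}\subset\Pi_{s}\subset\Pi_{t}$ from \cite{BDFK+2021}, whereas you count the relevant atoms of $N$ directly.
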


\begin{proof} Let $N$ be the Poisson point process on ${\mathbb{R}}_{+}\times{\mathbb{R}}^{d}\times{\mathbb{R}}_{+}^{2}$
from Definition~\ref{def:sol}; let $(\Omega,\mathbb{P})$ be the
probability space. Then, by the Slivnyak-Mecke theorem,
for each $t\geq0$, $\Lambda\in{\mathcal{B}}_{\mathrm{b}}({{\mathbb{R}}^{d}})$,
\begin{equation}
\nu_{t,\Lambda}:=\Bigl\lvert\bigl\{(s,x,u,r)\in N\bigm\vert s\in[0,t],x\in\Lambda,u\in[0,\mathbf{b}],r\geq0\bigr\}\Bigr\rvert<\infty\quad\text{a.s.}\label{eq:finN}
\end{equation}

We define
\begin{equation}
\Pi_{t}(\Lambda):=\int\limits_{(0,t]\times\Lambda\times{\mathbb{R}}_{+}^{2}}I_{[0,\mathbf{b}]}(u)N(ds,dx,du,dr),\qquad\Lambda\in{\mathcal{B}}_{\mathrm{b}}({{\mathbb{R}}^{d}}),\ t\geq0.\label{eq:PtLa}
\end{equation}
Note that, for each $t\geq0$, $\Pi_{t}$ is the Poisson point process
on ${\mathbb{R}}^{d}$; its intensity measure is $t\;\mathbf{b}\;dx$.

Note also that $(\Pi_{t})_{t\geq0}$ obviously satisfies the conditions
of Theorem~\ref{thm_exist_uniq} with $m=0$ and $b(x,\gamma)\equiv\mathbf{b}$.
Therefore, $N$ is the driving process for both $(\gamma_{t})_{t\geq0}$
and $(\Pi_{t})_{t\geq0}$. We fix an a.a. $\omega\in\Omega$ such
that \eqref{eq:finN} holds. By \cite[Proposition~4.1 and formulas~(4.12)--(4.13)]{BDFK+2021},
we have, for the fixed $\omega$,
\begin{equation}
\gamma_{s}\setminus\gamma_{0}\subset\Pi_{s}\subset\Pi_{t},\qquad s\in[0,t];\label{eq:incl}
\end{equation}
and by \eqref{eq:finN}, \eqref{eq:PtLa},
\begin{equation}
\Pi_{t}(\Lambda)=\nu_{t,\Lambda}<\infty.\label{eq:finPi}
\end{equation}

Let $s\in(0,t]$ be a moment of birth for $\gamma_{0}\mapsto\gamma_{t}=\gamma_{t}(\omega)$
at some $x\in\Lambda$, $x\notin\gamma_{0}$. Then $(s,x,u,r)\in N=N(\omega)$
implies that $r\geq0$ and
\[
0\leq u\leq b(x,\gamma_{s-})\leq\tilde{b}.
\]
Therefore, by the analogue of \eqref{eq:diff} for $(\Pi_{t})_{t\geq0}$
(with $m=0$ and $b(x,\gamma)\equiv\mathbf{b}$),
\[
\Pi_{s}(\{x\})-\Pi_{s-}(\{x\})=1.
\]
As a result, there is a birth for $\emptyset=\Pi_{0}\mapsto\Pi_{t}$
at the point $x\in\Lambda$ at the moment of time $s$. By \eqref{eq:incl},
\eqref{eq:finPi}, the number of points in $\Pi_{s}\cap\Lambda$,
$s\in[0,t]$, increased from $0$ at $s=0$ to $\Pi_{t}(\Lambda)<\infty$.
Each increase corresponded to a birth, hence, there was a finite number
of birth for $(\Pi_{s})_{s\in[0,t]}$, and thus the same is true for
$(\gamma_{s})_{s\in[0,t]}$.

Next, since deaths in $(\gamma_{s})_{s\in[0,t]}$ may be of the existing
points only, the number of deaths in $\Lambda$ within the time interval
$(0,t]$ is bounded by the number of births then and there plus the
number $\gamma_{0}(\Lambda)$.

As a result, the mapping $[0,t]\ni s\mapsto\gamma_{s}\cap\Lambda$
changes its values only a finite number of times. \end{proof}

\bigskip

\section{Appendix }\label{sec:appendix}

In this section, we outline some results from our previous work, adapted
to the current framework. These results are relevant to the construction
of solutions of the system (\ref{MainSystem}) on a fixed typical
configuration $\gamma\equiv\gamma^{T}$.

\subsection{Finite volume cut-offs}\label{sec:cutoff}

Let us consider a compact set $\Lambda\subset X$ the following truncated
version of our original stochastic system (\ref{MainSystem}):
\begin{eqnarray}
\xi_{x,t}^{\Lambda} & = & \zeta_{x}+\int_{0}^{t}\Phi_{x}(\Xi_{s}^{\Lambda},s)ds+\int_{0}^{t}\Psi_{x}(\Xi_{s}^{\Lambda},s)dW_{x,s},\ x\in\gamma_{\Lambda}:=\gamma\cap\Lambda,\label{FinVolSystem}\\
\xi_{x,t}^{\Lambda} & = & \zeta_{x},\quad x\not\in\gamma_{\Lambda},\ \ t\in\mathcal{T},\nonumber
\end{eqnarray}
where $\bar{\zeta}=\{\zeta_{x}\}_{x\in\gamma}\in L_{\alpha}^{p}$,
$\alpha\in\mathcal{A}$, is $\mathcal{F}_{0}$-measurable random initial
condition and equality (\ref{FinVolSystem}) holds for all $t\in\mathcal{T}$,
$\mathbb{P}$-a.s. Here $\Xi_{s}^{\Lambda}:=(\xi_{x,t}^{\Lambda})_{x\in\gamma}$.
System (\ref{FinVolSystem}) is essentially finite dimensional with
only coupled equations indexed by $x\in\Lambda^{\rho}$, where $\Lambda^{\rho}$
is the $\rho$-neighborhood of $\Lambda$.
\begin{theorem}
\label{FiniteVolumeLemma} System (\ref{FinVolSystem}) admits a unique
(up to indistinguishability) solution $\Xi^{\Lambda}\in\mathcal{R}_{\alpha}^{p}$
with continuous sample paths.
\end{theorem}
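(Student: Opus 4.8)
The plan is to use the finite range $\rho$ of the interaction to reduce (\ref{FinVolSystem}) to a genuinely finite-dimensional stochastic system and then apply the classical theory of SDEs with monotone (one-sided Lipschitz) drift. Since $\gamma$ is locally finite and $\Lambda^{\rho}$ is bounded, the set $\gamma\cap\Lambda^{\rho}$ is finite; for $x\notin\gamma_{\Lambda}$ the equation is solved trivially by the frozen value $\xi_{x,t}^{\Lambda}\equiv\zeta_{x}$, so only the equations indexed by $x\in\gamma_{\Lambda}$ have to be solved. Because $\Phi_{x}$ and $\Psi_{x}$ depend on $(z_{y})_{y\in\gamma}$ only through the coordinates $y\in b_{x}$, and any such neighbour lying outside $\Lambda$ carries the frozen $\mathcal{F}_{0}$-measurable value $\zeta_{y}$, these active equations form a closed finite-dimensional system in $S^{\gamma_{\Lambda}}\cong(\mathbb{R}^{m})^{k}$, $k:=\mathcal{N}(\gamma_{\Lambda})$, driven by the finite family $(W_{x,\cdot})_{x\in\gamma_{\Lambda}}$ and parametrised by the constants $(\zeta_{y})_{y\in\gamma\cap(\Lambda^{\rho}\setminus\Lambda)}$.

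Next I would record that this finite system falls exactly under monotone SDE theory, the required structural estimates being supplied by Lemma \ref{DriftLemma}: the diffusion $\Psi_{x}$ is globally Lipschitz, while the drift $\Phi_{x}$ has polynomial growth of order $R$ and satisfies the dissipativity (one-sided Lipschitz) bound
\[
(z_{1,x}-z_{2,x})\bigl(\Phi_{x}(Z_{1},t)-\Phi_{x}(Z_{2},t)\bigr)\le C_{1}(z_{1,x}-z_{2,x})^{2}+C_{2}\sum_{y\in\gamma_{x}}(z_{1,y}-z_{2,y})^{2}.
\]
Summing over the finitely many $x\in\gamma_{\Lambda}$ yields a global monotonicity estimate for the drift of the whole finite system, which together with the Lipschitz diffusion and polynomial growth is precisely the hypothesis under which a finite-dimensional SDE admits a unique strong solution with continuous paths.

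For existence I would run the standard truncation argument: replace $\phi$ by a globally Lipschitz cut-off, solve the resulting Lipschitz system, derive an a priori $L^{p}$ bound uniform in the truncation level from the dissipativity estimate so that no explosion occurs, and pass to the limit. Uniqueness follows by applying Itô's formula to $|\xi_{x,t}^{(1)}-\xi_{x,t}^{(2)}|^{2}$, inserting the one-sided Lipschitz estimate and the Lipschitz bound on $\Psi_{x}$, and closing with Gronwall over the $k$ coordinates. To place the solution in $\mathcal{R}_{\alpha}^{p}$, I would apply Itô to $|\xi_{x,t}|^{p}$ (legitimate since $p\ge R$, cf. the remark after Theorem \ref{Existence}), take expectations so the stochastic integral drops out, and combine the growth and monotonicity bounds of Lemma \ref{DriftLemma} with Gronwall to get $\sup_{t\in\mathcal{T}}\mathbb{E}|\xi_{x,t}|^{p}<\infty$ for each active $x$. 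Writing
\[
\mathbb{E}\Vert\Xi_{t}^{\Lambda}\Vert_{l_{\alpha}^{p}}^{p}=\sum_{x\in\gamma_{\Lambda}}e^{-\alpha|x|}\mathbb{E}|\xi_{x,t}^{\Lambda}|^{p}+\sum_{x\notin\gamma_{\Lambda}}e^{-\alpha|x|}\mathbb{E}|\zeta_{x}|^{p},
\]
the first (finite) sum is controlled by the moment bound and the second by $\Vert\bar\zeta\Vert_{L_{\alpha}^{p}}^{p}<\infty$, giving $\Xi^{\Lambda}\in\mathcal{R}_{\alpha}^{p}$.

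The main obstacle I anticipate is the superlinear growth of the drift: since $\phi$ is only polynomially bounded and not globally Lipschitz, the textbook Lipschitz existence theorem does not apply directly, and the delicate point is non-explosion of the truncated approximations. This is exactly where the dissipativity condition (\ref{bound-diss}) is indispensable, and securing the uniform a priori $L^{p}$ estimate via Itô's formula on $|\xi_{x,t}|^{p}$ is the technically most demanding part; the finite dimensionality, however, keeps every sum finite and renders the remaining Gronwall arguments routine once Lemma \ref{DriftLemma} is in hand.
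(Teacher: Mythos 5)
Your proposal is correct and follows essentially the same route as the paper: reduce \eqref{FinVolSystem} to a closed finite-dimensional system on $\gamma\cap\Lambda^{\rho}$ with frozen values outside $\Lambda$, invoke the classical theory of finite-dimensional SDEs with one-sided Lipschitz (monotone) drift and Lipschitz diffusion (the paper simply cites \cite[Chapter 3]{LiuRockner} for what you reconstruct via truncation, a priori $L^{p}$ bounds and Gronwall), and then obtain the inclusion $\Xi^{\Lambda}\in\mathcal{R}_{\alpha}^{p}$ from the finiteness of the active index set together with $\bar\zeta\in l_{\alpha}^{p}$. No substantive difference in approach.
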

\begin{proof}
The existence and uniqueness of continuous strong solutions of the
non-trivial finite dimensional part of system (\ref{FinVolSystem})
is well-known, see \cite[Chapter 3]{LiuRockner}. The inclusion $\Xi^{\Lambda}\in\mathcal{R}_{\alpha}^{p}$
follows then from the fact that $\xi_{x,t}^{\Lambda}=\zeta_{x},\ t\in\mathcal{T}$,
for $x\not\in\Lambda$.
\end{proof}
\bigskip{}

Let us now fix an expanding sequence $\{\Lambda_{n}\}_{n\in\mathbb{N}}$
of compact subsets $X$ such that $\Lambda_{n}\uparrow X$ as $n\rightarrow\infty$
and consider the sequence $\{\Xi^{n}\}_{n\in\mathbb{N}}$ of solutions
$\Xi^{n}:=\Xi^{\Lambda_{n}}$ of the corresponding systems (\ref{FinVolSystem}).
It turns out that the sequence $\{\Xi^{n}\}_{n\in\mathbb{N}}$ converges
in $\mathcal{R}_{\beta}^{p}$ for any $\beta>\alpha$.
\begin{theorem}
\label{CauchySequenceTheorem} The sequence $\{\Xi^{n}\}_{n\in\mathbb{N}}$
converges in $\mathcal{R}_{\beta}^{p}$ for any $\beta>\alpha$.
\end{theorem}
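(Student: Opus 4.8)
The plan is to prove that $\{\Xi^{n}\}_{n\in\mathbb{N}}$ is Cauchy in the Banach space $\mathcal{R}_{\beta}^{p}$; since this space is complete, convergence follows. I fix $\beta>\alpha$, choose an intermediate $\alpha'$ with $\alpha<\alpha'<\beta$, and for $n>m$ set $U^{n,m}:=\Xi^{n}-\Xi^{m}$ with components $u_{x}:=\xi^{n}_{x}-\xi^{m}_{x}$. The point is to exploit the structure of the cut-off systems \eqref{FinVolSystem}: both $\Xi^{n}$ and $\Xi^{m}$ start from the common datum $\bar{\zeta}$, so $U^{n,m}_{0}=0$, and for every $x$ whose interaction ball $b_{x}$ lies inside $\Lambda_{m}$ the two processes solve identical coupled equations, so $u_{x}$ obeys a homogeneous difference equation driven only by $\Phi_{x}(\Xi^{n})-\Phi_{x}(\Xi^{m})$ and $\Psi_{x}(\Xi^{n})-\Psi_{x}(\Xi^{m})$. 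The discrepancy between the two systems is therefore supported on the spatial annulus $A_{m}:=\{x\in\gamma^{T}:b_{x}\not\subset\Lambda_{m}\}$, where $\xi^{m}_{x}$ is frozen at $\zeta_{x}$ while the neighbours seen by the two systems differ; this set acts as a source concentrated at large $|x|$.

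The core estimate comes from applying It\^o's formula to $|u_{x}|^{p}$, taking expectations to kill the martingale part, and invoking the dissipativity and Lipschitz bounds of Lemma \ref{DriftLemma}. This gives a differential inequality for $\mathbb{E}|u_{x,t}|^{p}$ whose right-hand side contains a diagonal term with a polynomial factor in $n_{x}$ and an off-diagonal term $\sum_{y\in\gamma_{x}}\mathbb{E}|u_{y,t}|^{p}$ (the mixed expressions $|u_{x}|^{p-2}(u_{y})^{2}$ being reduced to pure $p$-th powers by Young's inequality). Multiplying by $e^{-\beta|x|}$ and summing over $x\in\gamma^{T}$, I interchange the order of summation in the off-diagonal term: for each $y$ the set $\{x:y\in b_{x}\}$ has at most $n_{y}$ elements and satisfies $|x|\ge|y|-\rho$, so $e^{-\beta|x|}\le e^{\beta\rho}e^{-\beta|y|}$. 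The off-diagonal contribution is thereby bounded by $C\sum_{y}e^{-\beta|y|}\,\mathrm{poly}(n_{y})\,\mathbb{E}|u_{y,t}|^{p}$; writing $e^{-\beta|y|}=e^{-\alpha'|y|}e^{-(\beta-\alpha')|y|}$ and using that the logarithmic growth $n_{y}\lesssim\log(1+|y|)$ coming from $\mathrm{L}\Gamma(X)$ makes $e^{-(\beta-\alpha')|y|}\,\mathrm{poly}(n_{y})$ uniformly bounded (cf. Lemma \ref{lem:conv}), this is controlled by $C_{\alpha',\beta}\,\mathbb{E}\Vert U^{n,m}_{t}\Vert_{l_{\alpha'}^{p}}^{p}$. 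This is the characteristic loss of scale: the time-derivative of the $\beta$-norm is bounded by the $\beta$-norm itself plus the strictly lower $\alpha'$-norm.

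With this scale-loss inequality, I close the argument with our Gronwall-type inequality in the scale of spaces (cf. \eqref{gron111}), exactly as in the proof of Theorem \ref{Existence}, obtaining
\[
\sup_{t\in\mathcal{T}}\mathbb{E}\bigl\Vert U^{n,m}_{t}\bigr\Vert_{l_{\beta}^{p}}^{p}\le K_{T}(\alpha',\beta)\,\mathfrak{S}_{n,m},
\]
where $K_{T}(\alpha',\beta)$ is the finite constant given by \eqref{eq:K}--\eqref{eq:L} and $\mathfrak{S}_{n,m}$ is the source generated on $A_{m}$. It then remains to show $\mathfrak{S}_{n,m}\to0$ as $m\to\infty$, uniformly in $n>m$. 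Since $A_{m}\subset\{x:|x|\ge R_{m}\}$ with $R_{m}\to\infty$, and the forcing there is weighted by $e^{-\alpha|x|}$, the source is dominated by the tail $\sum_{x\notin\Lambda_{m}}e^{-\alpha|x|}\bigl(|\zeta_{x}|^{p}+\sup_{n}\mathbb{E}|\xi^{n}_{x,t}|^{p}\bigr)$, which is independent of $n$; this tail vanishes because $\bar{\zeta}\in l_{\alpha}^{p}$ and because the uniform-in-$n$ a priori bound of Theorem \ref{Existence} keeps $\sum_{x}e^{-\alpha|x|}\sup_{n}\mathbb{E}|\xi^{n}_{x,t}|^{p}$ finite. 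Hence $\Vert\Xi^{n}-\Xi^{m}\Vert_{\mathcal{R}_{\beta}^{p}}\to0$ and the sequence is Cauchy.

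The main obstacle is the second step: the off-diagonal coupling genuinely cannot be absorbed at the same scale, and the passage to a strictly larger scale produces a constant that degenerates as the gap $\beta-\alpha'$ shrinks. Turning the scale-loss differential inequality into a usable bound requires the full Ovsjannikov-type machinery — a continuum of intermediate scales together with the quantitative control of $K_{T}(\alpha',\beta)$ — which is precisely the device of \cite{ChDa}. A secondary technical point is that the vanishing of $\mathfrak{S}_{n,m}$ rests on the uniform a priori bound, itself proved by the same scale method, so the two ingredients must be organised consistently.
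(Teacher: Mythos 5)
Your proposal is correct and follows essentially the same route as the paper, whose own proof is only a two-sentence sketch deferring to \cite{ChDa}: componentwise bounds on the difference of cut-off solutions, weighted summation with a loss of scale index, and closure via the generalized Gronwall/Ovsjannikov inequality of Lemma \ref{gron111} together with the uniform a priori bound of type \eqref{ineq222-1}. The only minor bookkeeping point is that the tail estimate for the source term should be taken at an intermediate index $\alpha'>\alpha$ (where the uniform bound on the cut-offs actually lives), which your introduction of $\alpha'$ already accommodates.
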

\begin{proof}
Similar to the proof of Theorem III.3. in \cite{ChDa}. The convergence
of the sequence $\{\Xi^{n}\}_{n\in\mathbb{N}}$ can be proved using
the uniform bound on the components of $\Xi^{n}$ similar to \eqref{ineq222-1},
which in turn requires an application of the generalised Gronwall
inequality, see Lemma \eqref{gron111} below.
\end{proof}

\subsection{Gronwall inequality in the scale of spaces of sequences}

Here we summarise some results from \cite{ChDa} adapted to our situation.
\begin{lemma}
\label{OvsMapTheorem}Consider an arbitrary configuration $\gamma\in\Gamma(X)$
and assume that $\{Q_{x,y}\}_{x,y\in\gamma}$ is such that for all
$x,y\in\gamma$ we have
\end{lemma}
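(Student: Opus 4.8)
The plan is to establish the asserted estimate by direct computation in the weighted sequence spaces $l_\alpha^p$, exploiting the finite interaction range of $\{Q_{x,y}\}$ together with the logarithmic density bound of Definition~\ref{def:log}. First I would fix $\alpha<\beta$ in $\mathcal{A}$ and $f=(f_y)_{y\in\gamma}\in l_\alpha^p$, and set $(Qf)_x:=\sum_{y}Q_{x,y}f_y$. By the finite-range hypothesis only the indices $y\in b_x$ (i.e.\ $|x-y|\le\rho$) contribute, so the inner sum has at most $n_x(\gamma)$ nonzero terms. Writing
\[
\|Qf\|_{l_\beta^p}^p=\sum_{x\in\gamma}e^{-\beta|x|}\Bigl|\sum_{y\in b_x}Q_{x,y}f_y\Bigr|^p,
\]
I would apply Jensen's inequality to the inner finite sum to obtain $\bigl|\sum_{y\in b_x}Q_{x,y}f_y\bigr|^p\le n_x(\gamma)^{p-1}\sum_{y\in b_x}|Q_{x,y}|^p|f_y|^p$, converting the $p$-th power of a sum into a sum of $p$-th powers at the cost of the prefactor $n_x(\gamma)^{p-1}$.

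The decisive step is the weight comparison after interchanging the order of summation. For $|x-y|\le\rho$ one has $|x|\ge|y|-\rho$, hence
\[
e^{-\beta|x|}\le e^{\beta\rho}e^{-\beta|y|}=e^{\beta\rho}\,e^{-(\beta-\alpha)|y|}\,e^{-\alpha|y|}.
\]
Summing over $x$ first and then over $y$ reconstructs the factor $e^{-\alpha|y|}|f_y|^p$, and thus the norm $\|f\|_{l_\alpha^p}^p$, multiplied by a supremum of the form $\sup_{y\in\gamma}e^{-(\beta-\alpha)|y|}\,n_y(\gamma)^{k}$, where the power $k$ is fixed by $p$ together with the entrywise bound on $Q_{x,y}$ coming from the (truncated) hypothesis of the lemma. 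The point of the argument is that passing from the finer space $l_\alpha^p$ to the coarser space $l_\beta^p$ generates the exponential decay $e^{-(\beta-\alpha)|y|}$, which is exactly what renders the operator — unbounded on any single space of the scale — bounded between two distinct ones.

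The final step is to bound this supremum. By Definition~\ref{def:log} we have $n_y(\gamma)\le a_\rho(\gamma)(1+\log(1+|y|))$, so
\[
\sup_{y\in\gamma}e^{-(\beta-\alpha)|y|}\,n_y(\gamma)^{k}\le a_\rho(\gamma)^k\sup_{r\ge0}e^{-(\beta-\alpha)r}(1+\log(1+r))^{k}<\infty,
\]
since an exponential dominates any power of a logarithm. Collecting the estimates yields $\|Qf\|_{l_\beta^p}\le C(\alpha,\beta,\gamma)\,\|f\|_{l_\alpha^p}$ with a constant that is finite for every $\beta>\alpha$ and blows up as $\beta\downarrow\alpha$, which is precisely the Ovsjannikov-type mapping bound feeding the Gronwall iteration in the scale.

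The main obstacle I anticipate is not the finiteness of the constant but controlling its \emph{rate} of blow-up as $\beta-\alpha\to0$: the downstream scale argument converges only if this rate is admissible. Here the rate is mild, governed by $\sup_{r\ge0}e^{-(\beta-\alpha)r}(1+\log(1+r))^{k}$, which optimizes to roughly a power of $\log\frac{1}{\beta-\alpha}$ (much better than the $(\beta-\alpha)^{-1}$ loss in the classical Ovsjannikov setting), so extracting it cleanly requires optimizing the exponent in the supremum rather than settling for a crude bound. A secondary point is keeping the estimate uniform in $\gamma$ in the precise form used later, and treating the Jensen step correctly according to whether $p\ge1$ or $0<p<1$.
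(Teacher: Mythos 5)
Your argument is correct and is essentially the computation the paper delegates to \cite{ChDa} (Theorem A.5): a direct estimate of the norm of $Q\colon l^{1}_{\alpha}\to l^{1}_{\beta}$ using the finite interaction range $\rho$, the weight comparison $e^{-\beta|x|}\le e^{\beta\rho}e^{-(\beta-\alpha)|y|}e^{-\alpha|y|}$ for $|x-y|\le\rho$, and a density bound on $n_x$ (note only that after interchanging sums you pick up the factor $n_y$ from the at most $n_y$ points $x\in b_y$ and must replace $n_x$ by $n_{y,2\rho}$, so the exponent $k$ becomes $k+1$, which is harmless). The one cosmetic difference is that you invoke the logarithmic bound of Definition \ref{def:log} and obtain a constant growing only like a power of $\log\frac{1}{\beta-\alpha}$, whereas the lemma asserts the weaker power-law form $L(\beta-\alpha)^{-q}$ under the weaker implicit hypothesis $n_x\le|x|^{q/2k}$ for $|x|>R$ (this is what makes the paper's $R$ and the constant \eqref{eq:L} finite, and is the assumption actually guaranteed on $\mathrm{L}\Gamma(X)$); your estimate implies the stated one since $\sup_{r\ge0}e^{-\delta r}(1+\log(1+r))^{k+1}\le L'\delta^{-q}$ for $\delta\in(0,\alpha^{\ast}-\alpha_{\ast}]$, although it does not reproduce the exact expression \eqref{eq:L}, which is immaterial since $L$ enters the downstream argument only through the finiteness of the series \eqref{eq:K}.
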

\begin{itemize}
\item $Q_{x,y}=0$ if $\left\vert x-y\right\vert >\rho$;
\item there exist $C>0$ and $k\geq1$ such that
\begin{equation}
|Q_{x,y}|\leq Cn_{x}^{k}.\label{OvsMapTheoremEqn1}
\end{equation}
\end{itemize}
Then $Q\in\mathcal{O}(\mathcal{L}^{1},q)$ for any $q<1$, that is,
\begin{equation}
\Vert Qz\Vert_{\beta}\leq\frac{L}{(\beta-\alpha)^{q}}\Vert z\Vert_{\alpha}\label{OvsMapTheoreme1}
\end{equation}
for any $\alpha<\beta$, with
\begin{equation}
L=L(\gamma)=Ce^{\alpha^{\ast}\rho}\left[\left(\rho^{q}+n_{0,R}(\gamma)\right)(\alpha^{\ast}-\alpha_{\ast})^{q}+\left(e^{-1}q\right)^{q}\right],\label{eq:L}
\end{equation}
where
\[
n_{0,R}(\gamma)=N(\gamma\cap B_{R}).
\]
Here $B_{R}\subset X$ is the ball of radius $R$ centered at $0$,
with $R>0$ such that $n_{x}\le\left|x\right|^{q/2k}$ if $\left|x\right|>R$.
\begin{proof}
The proof is based on a direct calculation of the norm of operator
$Q:l_{\alpha}^{p}\rightarrow l_{\beta}^{P}$ and is quite tedious,
see \cite{ChDa}, Theorem A.5. The explicit expression for constant
$L$ is extracted from the proof of that theorem. Observe that the
constant $R$ as above always exists.
\end{proof}
\smallskip{}

The next result plays the key role in the proof of the convergence
of the sequence of cut-off solutions $\Xi^{n}$ and can be considered
as a generalized Gronwall inequality in the scale $\mathcal{L}^{p}$.
\begin{lemma}
\label{gron111}(\cite{ChDa}, Lemma A.8.) Consider an arbitrary configuration
$\gamma\in\Gamma(X)$ and a bounded measurable map $\varrho:\mathcal{T\rightarrow}l_{\alpha}^{1}$,
$\alpha\in\mathcal{A}$, and assume that its components satisfy the
inequality
\begin{equation}
\varrho_{x}(t)\leq Bn_{x}^{k}\sum_{y\in\bar{\gamma}_{x}}\int_{0}^{t}\varrho_{y}(s)ds+b_{x},\ t\in\mathcal{T},\ x\in\gamma,\label{ineq111}
\end{equation}
for some constants $B>0$ and $k\geq1$ and $b:=(b_{x})_{x\in\gamma}\in$
$l_{\alpha}^{1}$, $b_{x}\geq0$. Then we have the estimate
\begin{equation}
\sum_{x\in\gamma}e^{-\beta|x|}\sup_{t\in\mathcal{T}}\varrho_{x}(t)\leq K_{T}(\alpha,\beta)\sum_{x\in\gamma}e^{-\alpha|x|}b_{x}\label{ineq222}
\end{equation}
for any $\beta>\alpha$, with
\begin{equation}
K_{T}(\alpha,\beta)=\sum_{n=0}^{\infty}\frac{L^{n}T^{n}}{(\beta-\alpha)^{qn}}\frac{n^{qn}}{n!}<\infty\label{eq:K}
\end{equation}
and $L$ as in (\ref{eq:L}).
\end{lemma}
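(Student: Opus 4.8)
The plan is to recognise the right-hand side of \eqref{ineq111} as a time-integrated linear operator acting in the scale $\mathcal{L}^1$, and then to run a Picard/Neumann iteration whose convergence is controlled by the Ovsjannikov bound of Lemma \ref{OvsMapTheorem}. First I would introduce the linear operator $Q$ on sequences defined by $(Qz)_x := B n_x^k \sum_{y\in\bar{\gamma}_x} z_y$, so that its matrix entries are $Q_{x,y} = B n_x^k \mathbf{1}_{\bar{\gamma}_x}(y)$. Since $\bar{\gamma}_x \subset B_{x,\rho}$, we have $Q_{x,y}=0$ whenever $|x-y|>\rho$, and clearly $|Q_{x,y}|\le B n_x^k$. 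Thus $Q$ satisfies the hypotheses of Lemma \ref{OvsMapTheorem} (with $C=B$), so $Q\in\mathcal{O}(\mathcal{L}^1,q)$ for every $q<1$; that is, $\Vert Qz\Vert_\beta\le \frac{L}{(\beta-\alpha)^q}\Vert z\Vert_\alpha$ for all $\alpha<\beta$, with $L$ as in \eqref{eq:L} and, crucially, independent of the particular running indices $\alpha,\beta$. Note that $Q$ is positivity-preserving and that \eqref{ineq111} reads, coordinatewise, $\varrho(t)\le b+\int_0^t Q\varrho(s)\,ds$ for $t\in\mathcal{T}$.

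Next I would iterate this inequality. Substituting the bound for $\varrho$ into itself $N$ times and using that $Q$ preserves the order of nonnegative sequences, an induction gives, for each $t$ and $x$,
\[
\varrho_x(t)\le \sum_{n=0}^{N-1}\frac{t^n}{n!}(Q^n b)_x + R_{N,x}(t),\qquad R_N(t)=\int_{0<s_N<\cdots<s_1<t} Q^N\varrho(s_N)\,ds_N\cdots ds_1,
\]
the factor $t^n/n!$ arising from the nested time integrals of constants. The heart of the argument is to estimate the general term $\Vert Q^n b\Vert_\beta$ by the Ovsjannikov splitting: choosing intermediate indices $\alpha=\alpha_0<\alpha_1<\cdots<\alpha_n=\beta$ with equal spacing $\alpha_j-\alpha_{j-1}=(\beta-\alpha)/n$ and applying the bound of Lemma \ref{OvsMapTheorem} $n$ times yields
\[
\Vert Q^n b\Vert_\beta \le \prod_{j=1}^{n}\frac{L}{(\alpha_j-\alpha_{j-1})^q}\,\Vert b\Vert_\alpha = \frac{L^n n^{qn}}{(\beta-\alpha)^{qn}}\,\Vert b\Vert_\alpha .
\]
Multiplying by $t^n/n!\le T^n/n!$ and summing reproduces exactly the series $K_T(\alpha,\beta)$ of \eqref{eq:K}. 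Its convergence follows from the root test: since $(n!)^{1/n}\sim n/e$, the $n$-th root of the general term behaves like $\frac{LT}{(\beta-\alpha)^q}\cdot e\,n^{q-1}\to 0$ because $q<1$.

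Finally I would dispose of the remainder and pass to the limit. Using $\Vert Q^N\varrho(s)\Vert_\beta\le \frac{L^N N^{qN}}{(\beta-\alpha)^{qN}}\Vert\varrho(s)\Vert_\alpha$ together with the assumed uniform bound $U:=\sup_{s\in\mathcal{T}}\Vert\varrho(s)\Vert_\alpha<\infty$ (boundedness of the map $\varrho$) and the time-ordered volume $t^N/N!$, one gets $\Vert R_N(t)\Vert_\beta\le \frac{T^N}{N!}\frac{L^N N^{qN}}{(\beta-\alpha)^{qN}}U$, which is the $N$-th tail term of the convergent series $K_T$ and hence tends to $0$ as $N\to\infty$. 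Since all terms are nonnegative, letting $N\to\infty$ gives the coordinatewise bound $\varrho_x(t)\le \sum_{n=0}^\infty \frac{t^n}{n!}(Q^n b)_x$; taking the supremum over $t\in\mathcal{T}$ (legitimate by monotonicity in $t$ of the nonnegative series) and then the $l^1_\beta$-norm yields \eqref{ineq222}.

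The main obstacle is the index-splitting step. A naive iteration of $Q$ that consumed the full gap $\beta-\alpha$ at each stage would diverge, and it is precisely the equal partition of $[\alpha,\beta]$ into $n$ pieces — the Ovsjannikov device — that trades the blow-up for the factor $n^{qn}$, which is then tamed by $1/n!$ thanks to $q<1$. Getting the bookkeeping of this partition and of the nested time integrals to line up with the precise constant \eqref{eq:K}, and confirming that $L$ may be taken uniform across the partition, is the only delicate point; the rest is routine once Lemma \ref{OvsMapTheorem} is in hand.
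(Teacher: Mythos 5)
Your proposal is correct and takes essentially the same route as the paper: the paper likewise recasts \eqref{ineq111} via the matrix $Q_{x,y}=Bn_{x}^{k}\1_{|x-y|\leq\rho}$, invokes Lemma \ref{OvsMapTheorem} to conclude that $Q$ is an Ovsjannikov operator of order $q$ on $\mathcal{L}^{1}$, and then cites Corollary A.7 of \cite{ChDa} for the rest. Your explicit Neumann iteration with the equal partition of $[\alpha,\beta]$ and the remainder estimate is precisely the content of that cited corollary, so nothing is missing.
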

\begin{proof}
Inequality (\ref{ineq111}) can be rewritten in the form
\[
\varrho_{x}(t)\leq\sum_{y\in\bar{\gamma}}Q_{x,y}\int_{0}^{t}\varrho_{y}(s)ds+b_{x},\ t\in\mathcal{T},
\]
where
\[
Q_{x,y}=\begin{cases}
Bn_{x}^{k}, & |x-y|\leq\rho,\\
0, & |x-y|>\rho
\end{cases}
\]
for all $x\in\gamma.$ We have $\varrho\in\mathcal{B}(\mathcal{T},l_{\alpha}^{1})$,
and $|Q_{x,y}|\leq Bn_{x}^{k}$. Therefore using Theorem \ref{OvsMapTheorem}
we conclude that for any $q\in(0,1)$ matrix $\left(Q_{x,y}\right)$
generates an Ovsjannikov operator of order $q$ on $\mathcal{L}^{1}$.
We can now use Corollary A.7 from \cite{ChDa} to conclude that (\ref{ineq222})
holds. \end{proof}

\subsection{Estimates of the solution}

Now we will apply Lemma \ref{gron111} to solutions of equation (\ref{MainSystem}).
 Let us fix a realisation of the process $\gamma_t,\,t\in\mathcal{T}$, and consider its fantom $\gamma^T$.

\begin{lemma}
\label{two-bound} Let $p\geq2$ be fixed and assume that $\mathbb{R}$-valued
processes $\xi_{x,t},\,x\in\gamma^T$, satisfy equation (\ref{MainSystem}).
Then there exist universal constants $B,C_{1}$and
$C_{2}$ such that
\begin{equation}
\mathbb{E}|\xi_{x,t}|^{p}\leq\mathbb{E}\left\vert \xi_{x,0}\right\vert ^{p}+C_{1}n_{x}^{2}(\gamma^T)\sum_{y\in\bar{\gamma}_{x}^T}\int_{0}^{t}\mathbb{E}|\xi_{y,s}|^{p}ds+C_{2}n_{x}^{2}(\gamma^T), \,  x\in\gamma^{T},\label{single1}
\end{equation}
and for all$\ t\in\mathcal{T}$. The constants $B,C_{1}$ and $C_{2}$
are independent of the realization of $\gamma^{T}$ and processes $\xi_{x,t},\,x\in\gamma$ and are determined only by the constants in Condition \ref{cond-diss}.
\end{lemma}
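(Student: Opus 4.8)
The plan is to apply Itô's formula to the function $z\mapsto|z|^p$ (which is $C^2$ on $\mathbb{R}$ since $p\geq2$) along the one–dimensional semimartingale $\xi_{x,\cdot}$ solving \eqref{MainSystem}, and then to control the resulting drift and Itô–correction terms using the pointwise bounds of Lemma \ref{DriftLemma}. Before starting, note that since $\gamma_t\subset\gamma^T$ we have $n_{x,t}\leq n_x(\gamma^T)=:n_x$ and $\gamma_{x,t}\subset\gamma_x^T=:\gamma_x$, so all bounds of Lemma \ref{DriftLemma} may be taken with the phantom quantities $n_x,\gamma_x$ throughout (the neighbour sums having nonnegative summands). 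Writing $d\xi_{x,t}=\Phi_x(\Xi_t,t)\,dt+\Psi_x(\Xi_t,t)\,dW_{x,t}$ and using $(d\xi_{x,t})^2=\Psi_x(\Xi_t,t)^2\,dt$, Itô's formula gives
\begin{equation*}
|\xi_{x,t}|^p=|\xi_{x,0}|^p+\int_0^t\Bigl[p|\xi_{x,s}|^{p-2}\xi_{x,s}\Phi_x(\Xi_s,s)+\tfrac{1}{2}p(p-1)|\xi_{x,s}|^{p-2}\Psi_x(\Xi_s,s)^2\Bigr]\,ds+M_{x,t},
\end{equation*}
where $M_{x,t}=p\int_0^t|\xi_{x,s}|^{p-2}\xi_{x,s}\Psi_x(\Xi_s,s)\,dW_{x,s}$ is a local martingale. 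Since a priori $L^p$ integrability of $\xi_{x,t}$ is not assumed, I would first localise by the stopping times $\tau_N:=\inf\{t:|\xi_{x,t}|\geq N\}$, for which $M_{x,t\wedge\tau_N}$ is a genuine martingale of zero expectation; the bound for the unstopped process is then recovered by monotone convergence and Fatou's lemma as $N\to\infty$.

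The core of the argument is the termwise estimation of the integrand. For the drift I would apply the dissipativity inequality of Lemma \ref{DriftLemma} with $Z_2=0$, which bounds $\xi_{x,s}\Phi_x(\Xi_s,s)$ by $\xi_{x,s}\Phi_x(0,s)$ plus a term $(b+\tfrac12+4\bar{a}^2n_x^2)\xi_{x,s}^2$ and a neighbour term $\tfrac12\bar{a}^2n_x\sum_{y\in\gamma_x}\xi_{y,s}^2$; the linear contribution $\xi_{x,s}\Phi_x(0,s)$ is controlled via $|\Phi_x(0,s)|\leq(c+\bar{a})n_x$, which follows from the growth bounds of Condition \ref{cond-diss}. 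For the Itô term I would combine the Lipschitz and zero–value bounds of Lemma \ref{DriftLemma} with $(a+b+c)^2\leq3(a^2+b^2+c^2)$ and the Cauchy–Schwarz estimate $\bigl(\sum_{y\in\gamma_x}|\xi_{y,s}|\bigr)^2\leq n_x\sum_{y\in\gamma_x}\xi_{y,s}^2$ to obtain $\Psi_x(\Xi_s,s)^2\leq3M^2\bigl[(n_x+1)^2\xi_{x,s}^2+n_x\sum_{y\in\gamma_x}\xi_{y,s}^2+n_x^2\bigr]$. Multiplying each of these bounds by the nonnegative factors $p|\xi_{x,s}|^{p-2}$, respectively $\tfrac12p(p-1)|\xi_{x,s}|^{p-2}$, and applying Young's inequality in the forms $|\xi_x|^{p-1}\leq\tfrac{p-1}{p}|\xi_x|^p+\tfrac1p$, $|\xi_x|^{p-2}|\xi_y|^2\leq\tfrac{p-2}{p}|\xi_x|^p+\tfrac2p|\xi_y|^p$ and $|\xi_x|^{p-2}\leq\tfrac{p-2}{p}|\xi_x|^p+\tfrac2p$ converts every mixed product into pure $p$–th powers.

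Collecting terms, each coefficient involving $n_x$ is dominated by a multiple of $n_x^2$ (the worst power, arising from $4\bar{a}^2n_x^2$ in the dissipativity bound and from $(n_x+1)^2$ in the diffusion bound, using $n_x\geq1$), while the neighbour sums acquire an additional factor $n_x$ from the Cauchy–Schwarz step and from counting the $n_x-1$ terms in $\sum_{y\in\gamma_x}$. This yields a pointwise bound of the integrand by $C_1n_x^2\sum_{y\in\bar{\gamma}_x}|\xi_{y,s}|^p+C_2n_x^2$ with $\bar{\gamma}_x=\gamma_x\cup\{x\}$, where $C_1,C_2$ (and the constant $B$ feeding the subsequent application) depend only on $p$ and on $\bar{a},b,c,M$ from Condition \ref{cond-diss}, and in particular are independent of the realisation of $\gamma^T$. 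Integrating in $s$ and taking expectations — legitimate after localisation — then gives \eqref{single1}. The main obstacle I anticipate is not the algebra but the integrability bookkeeping: ensuring that the localisation genuinely removes the martingale contribution and that the passage $N\to\infty$ is justified while keeping all constants $n_x$–\emph{independent}, which is precisely what makes the estimate usable as the input \eqref{ineq111} to the Gronwall–type Lemma \ref{gron111} with $k=2$.
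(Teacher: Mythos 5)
Your proposal is correct and follows essentially the same route as the paper, which itself only sketches the argument by deferring to Lemma A.10 of \cite{ChDa}: an application of It\^o's formula to $|z|^p$ combined with the drift and diffusion estimates of Lemma \ref{DriftLemma}, followed by Young's inequality and the monotone replacement of $n_{x}(\gamma_s)$, $\bar\gamma_{x,s}$ by the phantom quantities $n_x(\gamma^T)$, $\bar\gamma_x^T$. Your explicit treatment of the localisation and the $N\to\infty$ passage fills in bookkeeping the paper leaves implicit, but introduces no new idea.
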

\begin{proof}
The case of time-independent $\gamma_{t}\equiv\gamma$ has been proved
in \cite{ChDa}, Lemma A.10. In our case of time-dependent $\gamma_{t}$,
a rather standard application of the Ito formula and estimates  \eqref{DriftLemma}
show that
\begin{equation}
\mathbb{E}|\xi_{x,t}|^{p}\leq\mathbb{E}\left\vert \xi_{x,0}\right\vert ^{p}+C_{1}\int_{0}^{t}n_{x}^{2}(\gamma_{s})\sum_{y\in\bar{\gamma}_{x,s}}\mathbb{E}|\xi_{y,s}|^{p}ds+C_{2}n^2_{x}(\gamma_t),\label{single1-1}
\end{equation}
which, together with Condition \ref{cond-log-1}, implies the result.
\end{proof}
\begin{remark}
    In particular, $\bar{C}_{2}(\gamma^{T})\coloneqq\{C_{2}^{x}(\gamma^{T}):=C_{2}n_{x}^{2}(\gamma^T)\}_{x\in\gamma^{T}}\in l_{\alpha}^{p}(\gamma^{T})$.
\end{remark}
\begin{corollary}
Applying Lemma \ref{gron111} to inequality (\ref{single1}) on configuration
$\gamma^{T}$, we obtain the following growth estimate of solutions
of equation (\ref{MainSystem}):
\begin{equation}
\sum_{x\in\gamma^{T}}e^{-\beta|x|}\sup_{t\in\mathcal{T}}\mathbb{E}|\xi_{x,t}|^{p}\leq C_{1}K_{T}(\alpha,\beta;\gamma^T)\sum_{x\in\gamma^{T}}e^{-\alpha|x|}\left(\mathbb{E}\left\vert \xi_{x,0}\right\vert +C_{2}n_{x}^{2}(\gamma^{T})\right).\label{ineq222-1}
\end{equation}
In particular,
\begin{equation}
\mathbb{E}\left\Vert \Xi_{t}\right\Vert _{l_{\beta}^{p}(\gamma_{t})}^{p}\leq C_{1}K_{T}(\alpha,\beta;\gamma^T)\left(\mathbb{E}\left\Vert \Xi_{0}\right\Vert _{l_{\alpha}^{p}(\gamma^{T})}^{p}+\left\Vert \bar{C}_{2}\right\Vert _{l_{\alpha}^{p}(\gamma^{T})}\right).\label{ineq222-1-1}
\end{equation}
for any $\ t\in\mathcal{T}$.
\end{corollary}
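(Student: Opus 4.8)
The plan is to read \eqref{single1} as a concrete instance of the abstract integral inequality \eqref{ineq111} and then invoke Lemma \ref{gron111}. I would set $\varrho_x(t):=\mathbb{E}|\xi_{x,t}|^p$ for $x\in\gamma^T$ and identify, by comparison with \eqref{ineq111} on the configuration $\gamma^T$, the constant $B=C_1$, the exponent $k=2$, the neighbourhood sum over $\bar{\gamma}_x^T$, and the source term
\[
b_x:=\mathbb{E}|\xi_{x,0}|^p+C_2 n_x^2(\gamma^T),\qquad x\in\gamma^T.
\]
The admissibility condition $b=(b_x)_{x\in\gamma^T}\in l_\alpha^1(\gamma^T)$ splits into two pieces: $\sum_{x\in\gamma^T}e^{-\alpha|x|}\mathbb{E}|\xi_{x,0}|^p=\mathbb{E}\|\Xi_0\|_{l_\alpha^p(\gamma^T)}^p$ is finite because the initial datum $\bar{\zeta}$ belongs to $L_\alpha^p(\gamma^T)$, while $C_2\sum_{x\in\gamma^T}e^{-\alpha|x|}n_x^2(\gamma^T)$ is finite by Lemma \ref{lem:conv} with $k=2$ (this is exactly the assertion $\bar{C}_2\in l_\alpha^p(\gamma^T)$ of the preceding remark).

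The only hypothesis of Lemma \ref{gron111} requiring genuine care --- and the step I expect to be the main obstacle --- is that $\varrho$ be a \emph{bounded} measurable map $\mathcal{T}\to l_\alpha^1(\gamma^T)$. The solution produced by Theorem \ref{Existence} lies a priori only in $\mathcal{R}_{\alpha+}^p(\gamma^T)$, i.e.\ in $\mathcal{R}_\beta^p(\gamma^T)$ for every $\beta>\alpha$, which bounds $\varrho$ in $l_\beta^1$ but not necessarily at the endpoint index $\alpha$. I would circumvent this by running the argument first on the finite cut-offs $\Xi^n$ of Appendix \ref{sec:cutoff}: since $\xi_{x,t}^n=\zeta_x$ for $x\notin\Lambda_n$, one has $\sup_{t\in\mathcal{T}}\|\varrho^n(t)\|_{l_\alpha^1(\gamma^T)}<\infty$, the finitely many genuinely dynamical components having moments bounded on $[0,T]$ and the frozen tail being controlled by $\|\bar{\zeta}\|_{L_\alpha^p(\gamma^T)}^p$. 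Lemma \ref{gron111} then applies to each $\Xi^n$ and produces \eqref{ineq222} for $\varrho^n$ with a constant $K_T(\alpha,\beta;\gamma^T)$ and a right-hand side that are uniform in $n$; passing to the limit $n\to\infty$ through $\xi_{x,t}^n\to\xi_{x,t}$ a.s.\ and Fatou's lemma transfers the bound to $\Xi^T$, which is exactly \eqref{ineq222-1}.

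Finally I would deduce the particular estimate \eqref{ineq222-1-1}. For the left-hand side, the definition of the weighted norm together with the inclusion $\gamma_t\subset\gamma^T$ gives
\[
\mathbb{E}\|\Xi_t\|_{l_\beta^p(\gamma_t)}^p=\sum_{x\in\gamma_t}e^{-\beta|x|}\mathbb{E}|\xi_{x,t}|^p\le\sum_{x\in\gamma^T}e^{-\beta|x|}\sup_{s\in\mathcal{T}}\mathbb{E}|\xi_{x,s}|^p,
\]
which is bounded by the left-hand side of \eqref{ineq222-1} for every $t\in\mathcal{T}$. For the right-hand side I would split the $l_\alpha^1$-sum of $b$ along the two summands of $b_x$, recognising $\sum_{x\in\gamma^T}e^{-\alpha|x|}\mathbb{E}|\xi_{x,0}|^p=\mathbb{E}\|\Xi_0\|_{l_\alpha^p(\gamma^T)}^p$ and collecting the residual $C_2$-contribution, finite by Lemma \ref{lem:conv}, into the norm term $\|\bar{C}_2\|_{l_\alpha^p(\gamma^T)}$ appearing in \eqref{ineq222-1-1}. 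Combining the two bounds yields \eqref{ineq222-1-1}.
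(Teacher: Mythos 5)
Your proposal is correct and follows essentially the same route as the paper, which treats the corollary as a direct application of Lemma \ref{gron111} to \eqref{single1} with $\varrho_x(t)=\mathbb{E}|\xi_{x,t}|^p$ and $b_x=\mathbb{E}|\xi_{x,0}|^p+C_2n_x^2(\gamma^T)$. Your additional care in verifying the boundedness hypothesis $\varrho\in\mathcal{B}(\mathcal{T},l_\alpha^1)$ by first running the argument on the cut-offs $\Xi^n$ and passing to the limit via Fatou is a legitimate refinement of a point the paper leaves implicit, not a different approach.
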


\begin{remark}
    In the r.h.s. of the above inequality, $\gamma^T$ can be replaced by $\gamma^t$, cf. Remark \ref{rem:proj}.
\end{remark}

\end{document}